\def\alb{al\-ge\-braic}
\def\apl{application}
\def\apr{approximat}
\def\bc{boundary condition}
\def\bd{{\bf d}}
\def\bvp{boundary value problem}
\def\coe{coefficient}
\def\corr{corresponding}
\def\dim{di\-men\-sion}
\def\dimn{d}
\def\disc{discretiz}
\def\disp{\displaystyle}
\def\ell{elliptic}
\def\eq{equation}
\def\er{{\mathbb{R}}}
\def\fe{finite element}
\def\fem{finite element method}
\def\gad{\partial\S} 
\def\gh{g_h}
\def\hed{W^{1,p}_0(\Om)}
\def\K{{\bf K}}
\def\lpv{L^{p'}(\Om)}
\def\li{linear}
\def\M{{\cal M}}
\DeclareMathOperator{\meas}{meas}
\def\nb{\nabla}
\def\nm{numerical}
\def\Om{\Omega}
\def\op{operator}
\def\ov{\overline}
\def\pa{\partial}
\def\pr{problem}
\def\pcs{p^\ast}
\def\resp{respectively}
\def\sl{solution}
\def\sq{sequence}
\def\sth{such that}
\def\sy{system}
\def\uh{{u_h}}
\def\vh{V_h}
\def\vhh{v_h}
\def\vhn{V_h^0}
\def\wip{W^{1,p}(\S)}
\def\wep{W^{1,p}_0(\S)}
\def\ws{weak solution}
\def\sm{\smallskip}
\def\bi{\bigskip}
\def\me{\medskip}
\def\bi{\bigskip}
\def\sm{\smallskip}
\def\me{\medskip}
\DeclareMathOperator{\dint}{\mathrm{d}\!}
\renewcommand{\d}{\dint}
\DeclareMathOperator{\diverg}{div}
\renewcommand{\div}{\diverg}
\newcommand{\R}{\mathbb{R}}
\DeclareMathOperator{\spn}{span}
\DeclareMathOperator{\supp}{supp}
\newcommand{\Tcalh}{\mathcal{T}_h}
\renewcommand{\S}{S}                
\newcommand{\nbs}{\nabla_{S}}       
\newcommand{\nbsh}{\nabla_{S_h}}    
\newcommand{\hatp}{\hat p}
\begin{document}

\title{Discrete maximum principles for nonlinear elliptic finite element problems on Riemannian manifolds with boundary}
\shorttitle{DMPs for nonlinear elliptic finite element problems on Riemannian manifolds}

\author{%
{\sc
J\'anos Kar\'atson\thanks{Email: karatson@cs.elte.hu},} \\[2pt]
Department of Applied Analysis \& MTA-ELTE Numerical Analysis and Large Networks
Research Group, ELTE University;\\
Department of   Analysis, Technical University; Budapest, Hungary;\\
{\sc and}\\[6pt]
{\sc Bal\'azs Kov\'acs}\thanks{Corresponding author. Email: kovacs@na.uni-tuebingen.de}\\[2pt]
Mathematisches Institut, Universit\"at T\"ubingen,\\
Auf der Morgenstelle 10, 72076 T\"ubingen, Germany\\
{\sc and}\\[6pt]
{\sc Sergey Korotov}\thanks{Email: sergey.korotov@hvl.no}\\[2pt]
Department of Computing, Mathematics and Physics, \\
Western Norway University of Applied Sciences, Bergen, Norway
}
\shortauthorlist{J.~Kar\'atson, B.~Kov\'acs and S.~Korotov}

\maketitle

\begin{abstract}
{The maximum principle forms an important quali\-ta\-ti\-ve property of second order elliptic \eq s, therefore its discrete analogues, the so-called discrete maximum principles (DMPs) have drawn much attention owing to their role in reinforcing the qualitative reliability of the given numerical scheme.
In this paper  DMPs are established for nonlinear surface finite element problems on Riemannian manifolds, corresponding to the classical pointwise maximum principles on surfaces in the spirit of Pucci et al. Various real-life examples illustrate the scope of the results.}
{nonlinear elliptic problems on surfaces; maximum principles; discrete maximum principles; surface finite element method; simplicial mesh.}
%
\end{abstract}

\section{Introduction}

The maximum principle forms an important quali\-ta\-ti\-ve property of second order elliptic \eq s \cite{GT, PrW, Pucci_Serrin}. We will refer to such results as continuous  maximum principles (CMPs). Typical maximum principles often arise   in a form  stating that a solution attains a nonnegative maximum on the boundary, i.e.
\begin{equation}
\label{dmpintr}
\max\limits_{\ov\S} u \le \max \{ 0, \max\limits_{\gad} u \}.
\end{equation}
CMPs have also been studied on Riemannian manifolds. Thereby the main subject is often an Omori--Yau type maximum principle, but in this paper we are interested in classical pointwise versions, such as the ones studied by Antonini, Mugnai and Pucci in \cite{Pucci}, which give analogous results to (\ref{dmpintr}).

\sm

The discrete analogues of CMPs, the so-called discrete maximum principles (DMPs) have drawn much attention. The DMP is in fact an important measure of the qualitative reliability of the \nm \ scheme, otherwise one could get unphysical \nm \ \sl s like negative concentrations, etc.

We are interested for DMPs on a surface with boundary, in the context of the surface finite element method (SFEM). Originally, the surface FEM was developed by Dziuk in \cite{Dziuk88} for elliptic problems on closed surfaces (and further developed for evolving surfaces by Dziuk and Elliott in \cite{DziukElliott_ESFEM,DziukElliott_acta}). The theory was extended to elliptic problems on surfaces with boundaries in the preprint \cite{boundarySFEM}. 
In such a case a DMP reproduces  the above relation  for the surface finite element solution $u_h$ instead of $u$.
Various DMPs in Euclidean domains, including geometric conditions on the computational meshes for FEM solutions, have been given, e.g.\ in \cite{Ci70, CiRav, Failure, XuZik}. The authors' previous work, e.g.\ \cite{KarKor-NM, KK, KKN}, involves various types of linear and nonlinear equations and systems, and typical geometric conditions are nonobtuseness or acuteness in the case of simplicial meshes.

\sm

To our knowledge, no DMP has been established for (even linear) elliptic problems on surfaces yet. In this paper we provide such  results in the spirit of \cite{Pucci} for nonlinear  finite element problems on manifolds with boundary. We suitably adapt a 
treatment of the \fe \ matrix from  our earlier paper on the flat domain case  \cite{KarKor-NM}. We work in the mentioned
surface finite element setting, cf.\ \cite{boundarySFEM, Dziuk88}, and use the properties of the lift operator to achieve the arising stability requirements, thus
various discrete  maximum-minimum principles are established.
We note that discrete maximum principles for parabolic reaction--diffusion 
equations on closed surfaces have been treated in \cite{FMSV1,FMSV2,FMSV3}.

In this paper the issue of generating meshes with
suitable angle properties is also addressed. The theoretical results are supported by
numerical tests.

\section{Formulation and properties of the problem}

Our formulation  of the problem is based on \cite{Pucci}.  We consider a non\li \ \ell \ \pr \ which 
belongs to the class of  \pr s studied in \cite{Pucci} on manifolds.
Our goal is to adapt our approach from \cite{KarKor-NM} developed for Euclidean domains. We first show that a proper CMP can be derived, and then we verify  DMPs   for proper SFEM \disc ations.

We follow \cite{Pucci} and \cite{boundarySFEM, Dziuk88} for the  description of the \ell \ \pr \ on the manifold and of the SFEM, \resp .

\subsection{The nonlinear elliptic problem on a Riemannian manifold}

\subsubsection{The manifold}
\label{subsubmf}

Following the setting and notations of  \cite{Pucci}, let $\M$ be a smooth complete  $d$-\dim al Riemannian  manifold, and let $\S \subset \M$ be a closed bounded and regular subdomain of $\M$, so that $\ov\S$­ is a smooth
manifold with boundary, with outward normal $\nu$. 
In the sequel we shall write $T\S\times_{\S} \er$ in place of $T\S \times_{\S} (\S\times\er)$ to denote the fibered product bundle. In analogy with the Euclidean case, points of  $T\S\times_{\S} \er$ will be denoted with $(x,z,\xi)$, where $(x, \xi)\in T\S$ and $(x, z)\in \S\times\er$. Integrals will be taken with respect to the natural Riemannian measure, denoted by $d\M$.
The surface gradient, or tangential gradient (i.e.\ the orthographic projection of the standard gradient onto the surface) of a function $u$ on $\S$ will be simply denoted by $\nbs u$, i.e. $\nbs u = \nb u - (\nb u \cdot \nu) \nu$. Further, $\Delta_{\S}$ denotes the Laplace--Beltrami operator, and $\div_\S$ the tangential divergence.

For $p\ge 1$, the Lebesgue and Sobolev norms are defined as
$$
\|u\|_{p,\S}:= \Bigl( \int_{\S} |u|^p \d\M   \Bigr)^{1/p} \quad  \mbox{and} \qquad \|u\|_{1,p,\S}:=  \|u\|_{p,\S} +  \|\nb u\|_{p,\S}\, ,
$$
\resp , where $|u|:=g(u,u)^{1/2}$ using the metric tensor $g$, further, $\wip$ and $\wep$ are defined as the closure of  $C^\infty(\S)$ and  $C_0^\infty(\S)$, \resp , in the Sobolev norm $\|\cdot \|_{1,p,\S}$. We define $p':= p/(p-1)$ if $p>1$ and $p':=\infty$ if $p=1$ (see e.g.\ \cite{Dziuk88,DziukElliott_acta}).

There are similar Sobolev embedding estimates of $\wip$ available as in the Euclidean case. Let $p^\ast:=
\frac{p\dimn}{\dimn-p}$  if $\dimn>p$ and $p^\ast:=\infty$ if $\dimn\le p$. Here $\S$ is a closed bounded subdomain of the complete Riemannian $d$-manifold $\M$, hence it is a compact Riemannian $d$-manifold itself. Therefore, by \cite{Au}, for all $1\le p_1<\pcs$  we have $W^{1,p}(\S) \subset L^{p_1}(\S)$ and
\begin{equation}
\label{Sobombas}
\|v\|_{p_1,\S}  \le k_1 \|v\|_{1,p,\S}
\qquad (\forall v\in\wip)
\end{equation}
for some constant  $k_1>0$ depending on $p$ and $p_1$ but independent of $v$.

\subsubsection{The \pr }
\label{subsubpr}

We consider a nonlinear boundary value problem of the following type, involving a scalar non\li \ \coe \ in the principal part:
\begin{equation}
\label{mixbvpex}
\left\{
\begin{aligned}
-\div_{\S} \Bigl(b(x,u,\nbs u)\, \nbs u\Bigr) +q(x,u) =&\ f(x) &\qquad& {\rm on } \, \S,
\\
u   =&\  g(x)   &\qquad &  {\rm on} \
\partial\S,
\end{aligned}
\right.
\end{equation}
with the bounded surface $\S\subset\M$, with boundary $\pa\S$, under the following
conditions:

\bi
\noindent {\bf Assumptions \ref{subsubpr}.}
\begin{enumerate}
	\item[(A1)]
	Let $p\ge 2$ be a given parameter.
	
	\item[(A2)] The scalar functions $b:\S \times \R \times \R^{d+1} \to \er$, \, $q: \ov \S \times \er \to\er$
	are continuous.
	There holds
	$f\in \lpv$, further, 
	$g=g^*|_{\pa\S}$ for some $g^*\in W^{1,\gamma}(\S)$, where  $\gamma\ge p$ if $p > d$ and $\gamma> d$    if $p\leq d$ .
	\item[(A3)] (Uniform ellipticity.)
	The function $b$ satisfies
	\begin{equation*}
	\mu_0 + \mu_1 |\xi|^{p-2} \leq b(x,z,\xi) \leq M_0 + M_1 |\xi|^{p-2}
	\end{equation*}
	with constants $\mu_0, \mu_1, M_0, M_1>0$  independent of $(x,z,\xi)\in \S \times \R \times \R^{d+1}$.

	\item[(A4)]   Let $2\le p_1< +\infty$ \ if $\dimn\le p$, or $2\le p_1   < \hatp := \frac{2\dimn}{\dimn-p}$ if $\dimn>p$.
	There exist constants $\alpha, \beta\ge 0$ \sth \ for any $x\in\S$ and $z\in\er$ the following growth conditions hold: for every $z\ge 0$,
	\begin{equation*}
	0\le q(x,0)\equiv q(x,z)  \quad (\forall z\le 0), \qquad
	0\le q(x,z)- q(x,0) \leq \alpha  z + \beta z^{p_1-1} .
	\end{equation*}
\end{enumerate}

The  \ws \ $u\in\wip$ of \pr \ (\ref{mixbvpex}) will be  defined as follows:
\begin{equation}
\label{wsint}
\int\limits_{\S} \bigl[ b(x,u,\nbs u)\ \nbs u\cdot \nbs v + q(x,u)v
\bigr] \,  \d\M \,
=
\int\limits_\S fv \, \d\M \,
\qquad  (\forall v\in\wep)
\end{equation}
\begin{equation}
\label{wsbdy}
\hbox{ and}  \quad  u \, - \, g^\ast \in \wep .
\end{equation}

\begin{remark}
	\rm
	As mentioned before, related problems have been studied in the context of the DMP on usual Euclidean domains in our earlier paper \cite{KarKor-NM}, i.e.
	the above problem \eqref{mixbvpex} is a surface analogue of the problems considered therein.
	Now in \eqref{mixbvpex} we only consider Dirichlet \bc s, on the other hand, compared to \cite{KarKor-NM},
	we impose more general growth conditions allowing $p\neq 2$ as well, which involves a more general setting using the Banach space $\hed$.
\end{remark}

In the sequel we omit the sign $\d\M$ in the integrals for brevity, i.e. $\int_S f:= \int_S f\d\M$.

\subsection{Continuous maximum principle on the manifold}

\subsubsection{Preliminaries}
\label{subsubcmprel}

We will rely on \cite{Pucci}, where   CMPs are proved for a general class of \ell \  inequalities on manifolds. We summarize the required background in a rewritten form involving \ell \ \eq s with right-hand side of given sign.

\sm
Let us consider an  \ell \ \eq
\begin{equation}
\label{bvpucci}
-\div_{\S} \K(x,u,\nbs u)  +F(x,u,\nbs u)\, = \, k(x)
\end{equation}
on the domain $\S\subset\M$ under the following
conditions:

\bi

\sm

\noindent {\bf Assumptions \ref{subsubcmprel}.}
\begin{enumerate}
	\item[(i)] The functions
	$\K:T\S\times_{\S} \er\to T\S$ and
	$F: T\S\times_{\S} \er\to\er$
	are continuous, further, $\K(x, z,\xi)\in  T_x\M$ for all $(x, z)\in \S\times\er$ and $\xi\in T_x\M$.
	
	\item[(ii)]
	There exist constants
	$a_1 >0$ and $b_1,a_2\ge 0$  \sth \ for
	all $(x,z,\xi)\in T\S\times_{\S} \er$ there holds
	$$
	\K(x, z,\xi)\cdot \xi \ge a_1|\xi|^p - a_2 |z|^p, \qquad F(x, z,\xi) \ge -b_1 |\xi|^p\ .
	$$
\end{enumerate}

The treatment in \cite{Pucci} involves {\it p-regular weak solutions} of
(\ref{bvpucci}), defined as functions $u\in L^1_{loc}(\S)$ satisfying
$$
\K(\cdot,u,\nbs u)\in L^{p'}_{loc}(\S, T\S), \quad F(\cdot,u,\nbs u)\in L^{p'}_{loc}(\S)
$$
and demanding (\ref{wsint}) only for compactly supported $v\in\wep$.
The following result holds:

\begin{theorem}
	\label{thpucci}
	Let Assumptions \ref{subsubcmprel} hold  with $a_2 = 0$ in item (ii). Let $u$ be a p-regular weak solution of (\ref{bvpucci}) \sth \ $u\in W^{1,p}_{loc}(\S)$, and let the right-hand side $k\le 0$ on $\S$.
	If $u \le M$ on $\pa\S$
	for some constant $M\ge 0$, then $u\le M$  in $\S$.
\end{theorem}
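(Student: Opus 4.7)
The plan is to apply the classical exponential-substitution technique from the theory of quasilinear elliptic equations (in the spirit of Pucci--Serrin), which absorbs the bad gradient term coming from $F$ into the coercive principal part, thereby forcing $\nbs u$ to vanish on $\{u>M\}$.

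First I would set $w:=(u-M)_+$; since $u\le M$ on $\pa\S$ and $u\in W^{1,p}_{loc}(\S)$, the function $w$ inherits $W^{1,p}_{loc}$-regularity and has zero trace on $\pa\S$. Fix a parameter $\lambda>b_1/a_1$ and introduce
\begin{equation*}
    \Phi(t):=\frac{e^{\lambda t}-1}{\lambda},\qquad t\ge 0,
\end{equation*}
so that $\Phi(0)=0$, $\Phi'(t)=e^{\lambda t}$, and
\begin{equation*}
    a_1\Phi'(t)-b_1\Phi(t) \;=\; \Bigl(a_1-\tfrac{b_1}{\lambda}\Bigr)e^{\lambda t}+\tfrac{b_1}{\lambda} \;\ge\; \tfrac{b_1}{\lambda}\;>\;0 \qquad (t\ge 0).
\end{equation*}
This pointwise positivity is the algebraic core of the argument: $\Phi'$ strictly dominates $\Phi$ on a scale tuned to the ratio $b_1/a_1$.

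Next I would test the weak form of (\ref{bvpucci}) against $v:=\Phi(w)$, approximated by $\eta_n\Phi(w)$ for cutoffs $\eta_n\in C_c^\infty(\S)$ in order to respect the compact-support constraint built into the definition of p-regular weak solution. Since $\nbs v=\Phi'(w)\,\nbs u$ on $\{u>M\}$ and vanishes elsewhere, Assumptions \ref{subsubcmprel}(ii) with $a_2=0$, the sign hypothesis $k\le 0$, and $\Phi(w)\ge 0$ should combine to give
\begin{equation*}
    \int\limits_{\{u>M\}}\bigl(a_1\Phi'(w)-b_1\Phi(w)\bigr)\,|\nbs u|^p \;\le\; 0.
\end{equation*}
In view of the pointwise positivity above, this forces $\nbs u=0$ a.e.\ on $\{u>M\}$, i.e.\ $\nbs w\equiv 0$ on $\S$. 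A standard Sobolev argument on the compact manifold $\ov\S$ then implies $w$ is constant on each connected component, and the zero trace on $\pa\S$ upgrades the constant to zero, giving $u\le M$ on $\S$.

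I expect the main obstacle to be the cutoff limit just mentioned. Because p-regular weak solutions only admit compactly supported test functions, the product rule $\nbs(\eta_n\Phi(w))=\eta_n\Phi'(w)\nbs u+\Phi(w)\nbs\eta_n$ generates an error term $\int\Phi(w)\,\K(\cdot,u,\nbs u)\cdot\nbs\eta_n$ that has to be controlled by Young's inequality using the local $L^{p'}$-integrability of $\K$ built into p-regularity, and then killed as $\eta_n\uparrow 1_{\S}$. Exploiting any structural upper bound on $\K$, such as the one implicit in (A3) for the concrete problem (\ref{mixbvpex}), is what allows the estimate to close cleanly. A subsidiary technicality is that $\Phi(w)$ need not lie in $W^{1,p}$ unless $w$ is bounded, which can be handled by the further truncation $w_k:=\min(w,k)$ followed by the passage $k\to\infty$.
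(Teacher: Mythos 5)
A point of orientation first: the paper does not actually prove Theorem \ref{thpucci} — it is taken over from the literature, being (as the text says) a reformulation of \cite[Theorem 3.3]{Pucci} for equations instead of inequalities and with rearranged signs. So your proposal is not to be measured against an in-paper argument but against the cited result, and what you sketch is indeed the classical exponential-substitution proof of such quasilinear maximum principles; the overall strategy (test with $\Phi\bigl((u-M)_+\bigr)$, absorb the $-b_1|\xi|^p$ term into the coercivity via $\lambda>b_1/a_1$, conclude $\nbs u=0$ on $\{u>M\}$) is the right one.

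However, as a self-contained proof of the theorem \emph{as stated} there is a genuine gap, and it sits exactly where you flag it. Theorem \ref{thpucci} is formulated under Assumptions \ref{subsubcmprel} only: item (ii) gives one-sided lower bounds ($\K(x,z,\xi)\cdot\xi\ge a_1|\xi|^p$, $F\ge -b_1|\xi|^p$), and $p$-regularity gives only \emph{local} $L^{p'}$ integrability of $\K(\cdot,u,\nbs u)$ and $F(\cdot,u,\nbs u)$. Consequently your cutoff error term $\int \Phi(w)\,\K(\cdot,u,\nbs u)\cdot\nbs\eta_n$ cannot in general be "killed" as $\eta_n\uparrow 1$: $|\nbs\eta_n|$ blows up like the reciprocal distance to the boundary of the exhausted region, and you have neither a global upper bound on $|\K|$ (Assumption (ii) provides none), nor global $L^{p'}$ integrability, nor quantitative decay of $\Phi(w)$ near $\pa\S$ (a Hardy-type inequality or similar would be needed). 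Your proposed remedy — "exploiting any structural upper bound on $\K$, such as the one implicit in (A3)" — imports a hypothesis belonging to the concrete problem (\ref{mixbvpex}) and to the proof of Theorem \ref{thcmp}, not to Theorem \ref{thpucci}; with it you would at best prove the special case needed later, not the general statement. Closing this step at the stated generality is precisely the technical content of \cite[Theorem 3.3]{Pucci}, which is why the paper quotes rather than reproves it. Two smaller points: the bound $a_1\Phi'(t)-b_1\Phi(t)\ge b_1/\lambda>0$ fails when $b_1=0$ (positivity survives, since then the expression equals $a_1e^{\lambda t}\ge a_1$), and for a function that is merely $W^{1,p}_{loc}(\S)\cap L^1_{loc}(\S)$ the meaning of "$u\le M$ on $\pa\S$" (hence of "zero trace" for $w$) has to be made precise — in the application it comes from \eqref{wsbdy} — before the final step "locally constant plus zero trace implies $w\equiv 0$" can be carried out.
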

In fact, this result is the reformulation of \cite[Theorem 3.3.]{Pucci} for \eq s instead of inequalities and with rearranged signs.

\subsubsection{CMP for the studied class}

We may now prove the CMP by reformulating  the above result in the vein of inequality (\ref{dmpintr}).

\begin{theorem}
	\label{thcmp}
	Let  Assumptions \ref{subsubpr} hold. Assume that $u$ is a \ws \ of \pr \ (\ref{mixbvpex}), defined as in (\ref{wsint})--(\ref{wsbdy}), which satisfies  $u\in  C(\ov\S)$. Let
	\begin{equation}
	\label{assqs}
	f(x)-q(x,0)\le 0  \quad (a.e.\ \forall x\in\S ).
	\end{equation}
	
	\sm
	(1)
	Then
	\begin{equation}
	\label{cmp1}
	\max\limits_{\ov\S} u \le \max \{ 0, \max\limits_{\pa\S} g \}.
	\end{equation}
	
	(2) In particular, if   $g\ge 0$  then
	\begin{equation}
	\label{cmp2}
	\max\limits_{\ov\S} u = \max\limits_{\pa\S} g ,
	\end{equation}
	and, if   $g\le 0$  then we have the nonpositivity property
	\begin{equation*}
	\max\limits_{\ov\S} u \le 0.
	\end{equation*}
	
	(3) If we do not assume $u\in C(\ov\S)$ but only $g\in L^\infty(\S)$, then the above statements hold with replacing each $\max $ 
	by $\rm ess \, sup$,
	\resp .
\end{theorem}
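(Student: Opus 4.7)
The plan is to reduce Theorem \ref{thcmp} to the abstract CMP of Theorem \ref{thpucci} by recasting (\ref{mixbvpex}) into the form (\ref{bvpucci}) with a nonpositive right-hand side. The key idea is to split the reaction term as $q(x,u) = q(x,0) + [q(x,u)-q(x,0)]$, absorbing $q(x,0)$ into the right-hand side where it combines with $f$ to produce the sign hypothesis (\ref{assqs}). Concretely, I would set
\[
    \K(x,z,\xi) := b(x,z,\xi)\,\xi, \qquad F(x,z,\xi) := q(x,z)-q(x,0), \qquad k(x) := f(x)-q(x,0),
\]
so that (\ref{mixbvpex}) becomes $-\div_\S \K(\cdot,u,\nbs u)+F(\cdot,u,\nbs u)=k$, with $k\le 0$ a.e.\ on $\S$ by (\ref{assqs}).

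Next, I would verify Assumptions \ref{subsubcmprel} with $a_2 = 0$ and $b_1 = 0$. Continuity of $\K$ and $F$ follows from (A2). Since $p\ge 2$, the ellipticity bound (A3) gives
\[
    \K(x,z,\xi)\cdot\xi = b(x,z,\xi)\,|\xi|^2 \ge (\mu_0+\mu_1|\xi|^{p-2})|\xi|^2 \ge \mu_1|\xi|^p,
\]
so one may take $a_1:=\mu_1>0$. The two-sided sign condition in (A4) yields $F(x,z,\xi)\ge 0$ for every $z\in\er$ --- equality for $z\le 0$, nonnegativity for $z\ge 0$ --- so the bound $F\ge -b_1|\xi|^p$ holds trivially with $b_1=0$. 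It remains to verify that the weak solution $u\in\wip$ of (\ref{wsint})--(\ref{wsbdy}) is $p$-regular in the sense required by Theorem \ref{thpucci}; this follows from the growth (A3)--(A4) and the Sobolev embedding (\ref{Sobombas}) via routine H\"older estimates, and is where the exponent $\hatp$ in (A4) is actually used to control $F(\cdot,u,\nbs u)$.

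With the reformulation in place, part (1) follows at once. Set $M:=\max\{0,\max_{\pa\S} g\}\ge 0$; then $u=g\le M$ on $\pa\S$ by the definition of $M$, and Theorem \ref{thpucci} delivers $u\le M$ on $\ov\S$, which is exactly (\ref{cmp1}). Part (2) is then immediate from (1) together with the boundary condition $u|_{\pa\S}=g$: for $g\ge 0$ one has $M=\max_{\pa\S} g$, and the reverse bound $\max_{\ov\S} u\ge \max_{\pa\S} g$ yields (\ref{cmp2}); for $g\le 0$ one has $M=0$, giving the nonpositivity statement. For part (3), where $u$ is only known to lie in $W^{1,p}(\S)$ and $g\in L^\infty(\pa\S)$, the boundary inequality $u\le M$ must be interpreted in the trace/essential-supremum sense: since $u-g^*\in\wep$ the trace of $u$ equals $g$ a.e., so $\mathrm{ess\,sup}_{\pa\S} u\le M$, and the same application of Theorem \ref{thpucci} yields the essential-supremum version of all three statements. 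The principal technical point throughout is the $p$-regularity check, everything else being a direct reduction.
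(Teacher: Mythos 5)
Your proposal is correct and follows essentially the same route as the paper: the same identification $\K(x,z,\xi)=b(x,z,\xi)\,\xi$, $F=q(x,z)-q(x,0)$, $k=f-q(\cdot,0)$ with $a_1=\mu_1$, $a_2=b_1=0$, a $p$-regularity check (which the paper writes out via the growth bounds and $p'\le 2\le p$, and you only sketch), and then an application of Theorem \ref{thpucci}. Your single choice $M:=\max\{0,\max_{\pa\S}g\}$ merely merges the paper's two cases $\max_{\pa\S}g\ge 0$ and $\max_{\pa\S}g<0$, and your handling of parts (2)--(3) matches what the paper dismisses as obvious consequences.
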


\begin{proof}
	We verify that \pr \ (\ref{mixbvpex}) under  Assumptions \ref{subsubpr} is a special case of \pr \  (\ref{bvpucci}) under Assumptions \ref{subsubcmprel}. Throughout, the notation  $(x,z,\xi)$
	will stand for the general points of $\S \times \R \times \R^{d+1}$. 
	We first subtract $q(x,0)$ from both sides of  (\ref{mixbvpex}), then
	\begin{equation}
	\label{mixbvpexsubtr}
	\left\{
	\begin{aligned}
	-\,{\rm div}_{\S}\, \Bigl(b(x,u,\nbs u)\, \nbs u\Bigr) +\hat q(x,u) =&\ \hat f(x) &\qquad &  {\rm on \, } \ \S,
	\\
	\disp   \hskip50.5mm     u =&\ g(x)   &\qquad &  {\rm on} \
	\partial\S
	\end{aligned}
	\right.
	\end{equation}
	where
	\begin{equation*}
	\hat q(x,z):=q(x,z)-q(x,0), \qquad \hat f(x):=f(x)-q(x,0)
	\end{equation*}
	for all $x\in\S$, $z\in\er$. Let
	$$
	\K(x, z,\xi):=b(x,z,\xi)\, \xi, \qquad F(x, z,\xi):=\hat q(x,z).
	$$
	Owing to the continuity of $b$ and $q$, assumption (i) obviously holds. Further, for all arguments, assumptions (A3) and (A4) yield
	$$
	\K(x, z,\xi)\cdot \xi
	= b(x,z,\xi)\, |\xi|^2
	\ge
	\mu_0  |\xi|^2 + \mu_1 |\xi|^{p}
	\ge \mu_1 |\xi|^{p}
	$$
	and
	$$
	F(x, z,\xi)\ge 0,
	$$
	i.e. assumption (ii) holds with $a_1:=\mu_1>0$, $a_2=0$ and $b_1=0$.
	
	\sm
	We also have to check that $u$ is a $p$-regular weak solution of (\ref{bvpucci}) \sth \ $u\in W^{1,p}_{loc}(\S)$, and the right-hand side satisfies $k\le 0$ on $\S$.
	Here, first, assumption (A3) yields
	$$
	|\K(x, z,\xi)|  \le b(x,z,\xi)\, |\xi| \le  M_0 |\xi| + M_1 |\xi|^{p-1} ,
	$$
	hence
	$$
	|\K(\cdot,u,\nbs u)|^{p'} \le  M_3 |\nbs u|^{p'} + M_4 |\nbs u|^{(p-1)p'} = M_3 |\nbs u|^{p'} + M_4 |\nbs u|^{p}.
	$$
	Here the second term is integrable on $\S$ since the \ws \ $u\in\wip$, and also the first term is  integrable since $p'\le 2\le p$. Hence  $\K(\cdot,u,\nbs u)\in L^{p'} (\S, T\S)$. Similarly, from (A4), for all arguments
	$$
	0\le \hat q(x,z) \le \alpha  |z| + \beta |z|^{p_1-1} ,
	$$
	hence
	$$
	|F(\cdot,u,\nbs u)|^{p'} = |q(\cdot,u) |^{p'} \le \tilde\alpha  |u|^{p'} + \tilde\beta |u|^{p}
	$$
	is integrable on $\S$ since $u\in L^p(\S)$ and $p'\le 2\le p$. Also, since  $u\in\wip$, all local requirements are fulfilled even globally on $\S$. Equation (\ref{wsint}) holds for all $v\in\wep$, hence especially for compactly supported $v\in\wep$. Altogether, $u$ satisfies all criteria of a $p$-regular weak solution. Further, for all $x\in\S$, now
	$$
	k(x):=\hat f(x) := f(x)-q(x,0) \le 0 .
	$$
	That is, we can apply Theorem~\ref{thpucci}: if $u \le M$ on $\pa\S$ for some constant $M\ge 0$, then $u\le M$  in $\S$. Now we can already prove the desired statements.
	
	\sm
	
	(1) Let $\mu:= \max\limits_{\gad} u = \max\limits_{\gad} g$. If $\mu\ge 0$ then we can apply Theorem~\ref{thpucci} with choice $M:=\mu$, namely, since $u \le \mu$ on $\pa\S$, therefore $u\le \mu$ in $\S$ too, i.e.\
	$$
	\max\limits_{\ov\S} u \le \mu = \max \{ 0, \mu \} = \max \{ 0, \max\limits_{\gad} g \}.
	$$
	If $\mu<0$ then we can apply Theorem~\ref{thpucci} with choice $M:=0$, namely, since $u \le 0$ on $\pa\S$, therefore $u\le 0$ in $\S$ too, i.e.\
	$$
	\max\limits_{\ov\S} u \le 0 = \max \{ 0, \mu \} = \max \{ 0, \max\limits_{\gad} g \}.
	$$
	In both cases we have obtained the desired property (\ref{cmp1}).
	
	\sm
	(2)--(3) These are obvious con\sq s of statement (1).
\end{proof}

\bi
In the special case $q\equiv 0$, the equality (\ref{cmp2}) holds without assuming $g\ge 0$:

\begin{theorem}
	\label{thcmpeq}
	Consider \pr \ (\ref{mixbvpex}) with $q\equiv 0$, under
	the assumptions of Theorem \ref{thcmp}.  That is, let
	(A1)--(A3) hold, $u\in C(\ov\S)$, and assumption (\ref{assqs})  now takes the form
	$f(x) \le 0$ \ ($\forall x\in\S$).
	Then
	\begin{equation}
	\label{cmp2eq}
	\max\limits_{\ov\S} u = \max\limits_{\gad} g .
	\end{equation}
\end{theorem}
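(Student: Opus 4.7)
The plan is to derive the sharper equality \eqref{cmp2eq} from Theorem~\ref{thcmp}(1) by a constant shift of the solution; the hypothesis $q\equiv 0$ is exactly what makes the \eq \ invariant under such shifts. Set $M^{*}:=\max_{\pa\S}g$ and define $w:=u-M^{*}$. Since constants have vanishing tangential \gr, $\nbs w=\nbs u$, so $w$ is a \ws \ of
\[
    -\div_{\S}\bigl(\tilde b(x,w,\nbs w)\,\nbs w\bigr)=f(x)\quad\text{on }\S,\qquad w=g-M^{*}\text{ on }\pa\S,
\]
where $\tilde b(x,z,\xi):=b(x,z+M^{*},\xi)$. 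The translated \coe \ $\tilde b$ inherits (A2) and (A3) from $b$ because the bounds in (A3) are uniform in $z$, so the ellipticity \ct s $\mu_0,\mu_1,M_0,M_1$ are unchanged. The \bc \ \eqref{wsbdy} is preserved since $w-(g^{*}-M^{*})=u-g^{*}\in\wep$ and $g^{*}-M^{*}\in W^{1,\gamma}(\S)$.

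The shifted \pr \ thus satisfies the hypotheses of Theorem~\ref{thcmp} with $q\equiv 0$: condition \eqref{assqs} reduces to $f(x)\le 0$, which holds by assumption, and the translated Dirichlet data $g-M^{*}$ satisfies $\max_{\pa\S}(g-M^{*})=0$. Applying Theorem~\ref{thcmp}(1) to $w$ therefore yields
\[
    \max_{\ov\S}w\le \max\bigl\{0,\max_{\pa\S}(g-M^{*})\bigr\}=0,
\]
i.e.\ $u\le M^{*}$ throughout $\ov\S$. For the reverse inequality, since $u\in C(\ov\S)$ and $u=g$ on $\pa\S$, we have $\max_{\ov\S}u\ge \max_{\pa\S}u=\max_{\pa\S}g=M^{*}$. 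The two bounds combine to give \eqref{cmp2eq}.

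I do not expect a serious obstacle; the only point requiring attention is the bookkeeping in the shift, namely verifying that $\tilde b$ satisfies (A3) with the same \ct s and that $w$ is a legitimate \ws \ in the sense of \eqref{wsint}--\eqref{wsbdy}. Both are routine because the \ct s in (A3) are independent of $z$, and substituting $u=w+M^{*}$ into the weak formulation \eqref{wsint} for $u$ together with $\nbs u=\nbs w$ directly produces the weak formulation for $w$ on the same test space $\wep$. The essential feature that is used is $q\equiv 0$: if a genuine $q$-term were present, shifting would introduce $q(x,w+M^{*})$ on the left-hand side, for which the sign \ass \ \eqref{assqs} need not hold on the translated \pr.
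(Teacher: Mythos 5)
Your proof is correct and takes essentially the same approach as the paper: with $q\equiv 0$ the equation is invariant under constant shifts, so one shifts the solution and applies Theorem~\ref{thcmp}(1), the shifted coefficient inheriting (A2)--(A3) since the ellipticity bounds are uniform in $z$. The only cosmetic difference is that you shift uniformly by $M^{*}=\max_{\pa\S}g$ (and spell out the reverse inequality via $u=g$ on $\pa\S$), whereas the paper splits into cases and shifts by $+K$ only when $\max_{\pa\S}g=-K<0$.
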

\begin{proof}
	If $\max\limits_{\gad} g\ge 0$ then (\ref{cmp1}) implies (\ref{cmp2eq}). Let $\max\limits_{\gad} g< 0$, say, $\max\limits_{\gad} g=-K$ with some $K> 0$. Then the function $w:=u+K$ satisfies the same type of \pr \ with right-hand side  $f$  and b.c. $g+K$, \resp, hence Theorem \ref{thcmp} is valid for this \pr \ as well, and (\ref{cmp1}) for $w$ yields
	$$
	\max\limits_{\ov\S} w \le \max \{ 0, \, \max\limits_{\gad} \, (g+K) \}=0.
	$$
	Then
	$$
	\max\limits_{\ov\S} u \le -K = \max\limits_{\gad} g . \eqno
	$$
\end{proof}

\begin{remark}
	\label{blablaa}
	\rm
	We note that the corresponding minimum principles and nonnegativity property hold
	if the sign conditions on $f$
	are reversed.
\end{remark}

\subsection{Computational scheme: the surface finite element method}
\label{subsscheme}

From now on, we are interested in manifolds that can be embedded in 
$\R^{d+1}$ admitting 
an associated signed
distance function $\delta$, satisfying $\S = \{ x\in \R^{d+1} \mid \delta(x)=0\}$ and $\nb \delta = \nu$. This enables us to use the SFEM setting of \cite{Dziuk88,DziukElliott_acta}.
We note that this embedding assumption is not a very strong restriction in the practically most relevant case $d=2$: as 
described e.g. in \cite{HH},
a wide class of two-dimensional manifolds can be embedded in $\R^{3}$ (globally and isometrically)  under some proper conditions on their smoothness and   Gauss curvature. 

The smooth surface $\S$ with boundary $\pa\S$ is approximated by a triangulated one, denoted by $\S_h$, whose vertices $B_1, B_2, \dotsc, B_{\bar n}$  lie on the surface, including boundary nodes of $\S_h$ lying on $\pa\S$, and it is given as
\begin{equation*}
\S_h = \bigcup_{T \in \Tcalh} T .
\end{equation*}
We always assume that the simplices $T$  form an admissible triangulation (i.e. partition) $\Tcalh$, with $h$ denoting the maximum diameter.  Admissible triangulations were introduced in \cite[Section~5.1]{DziukElliott_ESFEM}: $\S_h$ is a uniform triangulation, i.e.\ every $T \in \Tcalh$ satisfies that the inner radius $\sigma_{h}$ is bounded from below by $ch$ with $c>0$, and $\S_{h}$ is
not a global double covering  of $\S$. The discrete tangential gradient on the discrete surface $\S_h$ is given by
\begin{equation*}
\nb_{\S_h} \phi = \nb {\phi} - (\nb {\phi} \cdot \nu_h)\ \nu_h,
\end{equation*}
understood in a piecewise sense, with $\nu_h$ denoting the normal to $\S_h$, cf.\ \cite{Dziuk88}.
The boundary of the approximation surface is simply $\pa\S_h$, which is an analogous approximation of $\pa\S$.

We define the surface finite element discretisation of our problem, following \cite{Dziuk88} and \cite{boundarySFEM}. We use simplicial elements and continuous piecewise linear functions. The finite element subspace $\vh$ is spanned by the continuous, piecewise linear
basis functions $\chi_j$, satisfying
\begin{equation*}
\chi_j(B_i) = \delta_{ij} \quad  \text{for all }i,j = 1, 2, \dotsc, {\bar n},
\end{equation*}
therefore
\begin{equation*}
\vh = \spn\big\{ \chi_1, \chi_2, \dotsc, \chi_{\bar n} \big\} .
\end{equation*}

Now, let $n < \bar n$ be \sth
\begin{equation*}
B_1, B_2, \dotsc, B_{n}
\end{equation*}
are the vertices that lie on $\S_h$ but not on $\pa\S_h$, and let
\begin{equation*}
B_{{n}+1}, \dotsc, B_{\bar n}
\end{equation*}
be the vertices that lie only on $\pa\S_h$. Then the
basis functions $\chi_1, \chi_2, \dotsc, \chi_{n}$ satisfy the homogeneous
Dirichlet boundary condition on $\pa\S_h$. Hence, we can define
$$
\vhn:={\rm span} \{ \chi_1,\chi_2, \dotsc, \chi_{n} \}\ \subset W^{1,p}_0(S_h).
$$
Further, let
\begin{equation}
\label{ghdef}
\gh = \sum_{j=n+1}^{\bar n} g_j \chi_j \ \in\vh
\end{equation}
(with $g_j\in\er$)  be the piecewise linear \apr ion of the function $g$  on $\pa\S_h$ (and on the
neighbouring elements).

\subsubsection{The lift operator}
\label{subsliftprop}

In the following we recall the so-called  {lift operator}, which was introduced in \cite{Dziuk88} and further investigated in \cite{DziukElliott_ESFEM}. The lift operator projects a finite element function on the discrete surface onto a function on the smooth surface.

Using the oriented distance function $\delta$ (cf.\ \cite[Section~2.1]{DziukElliott_ESFEM}), for a continuous function $\eta_h \colon \S_h \to \R$ its lift is defined as
\begin{equation*}
\eta_{h}^{l}(P) := \eta_h(x), \qquad x\in\S_h,
\end{equation*}
where for every $x\in \S_h$ the value $P=P(x)\in\S$ is uniquely defined via
\begin{equation}
\label{eq: lift defining equation}
x = P + \nu(P) \delta(x) .
\end{equation}
By $\eta^{-l}$ we mean the function whose lift is $\eta$.
Further, we have the lifted finite element spaces
\begin{align*}
V_h^l :=&\ \big\{ \varphi_h = \phi_h^l \, | \ \, \phi_h\in V_h \big\}, \\
(V_h^0)^l :=&\ \big\{ \varphi_h = \phi_h^l \, | \ \, \phi_h\in V_h^0 \big\}.
\end{align*}

The following equivalence between discrete and continuous norms has been shown in \cite{Demlow, Dziuk88}
for functions $\eta_h : \S_h \to \R$   with lift $\eta_h^l : \S \to \R$. {
	This will help to ensure stability of the discrete problem using the surface approximation.
}
\begin{lemma}[equivalence of norms]
	\label{lemma: equivalence of norms}
	The 
	original and lifted norms are equivalent in both the $L^p$ and $W^{1,p}$ case, independently of sufficiently small mesh sizes $h$. That is, there exist  constants $c, h_0>0$ such that for all $0< h\le h_0$,
	\begin{align*}
	c^{-1} \|\eta_h\|_{L^p(\S_h)} \leq &\ \|\eta_h^l\|_{L^p(\S)} \leq c\|\eta_h\|_{L^p(\S_h)} \qquad (\forall \eta_h\in L^p(\S_h)), \\
	c^{-1} \|\eta_h\|_{W^{1,p}(\S_h)} \leq &\ \|\eta_h^l\|_{W^{1,p}(\S)} \leq c\|\eta_h\|_{W^{1,p}(\S_h)} \qquad (\forall \eta_h\in W^{1,p}(\S_h)).
	\end{align*}
\end{lemma}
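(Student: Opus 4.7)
The plan is to reduce both statements to a change-of-variables argument between $\S_h$ and $\S$ via the lift map, showing that the resulting Jacobian and the induced transformation of tangential gradients are bounded above and below uniformly in $h$. Throughout, I would fix $h_0>0$ small enough that the tubular neighbourhood theorem applies, so that for every $x\in\S_h$ the point $P(x)\in\S$ defined by \eqref{eq: lift defining equation} is the unique closest point and $\delta\in C^2$ near $\S_h$, with $|\delta(x)|\le Ch^2$ and $|\nu(P(x))-\nu_h(x)|\le Ch$ for every $x\in\S_h$, uniformly for $h\le h_0$ (these are the standard geometric estimates in Dziuk \cite{Dziuk88}).

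For the $L^p$ equivalence, I would write $\int_\S |\eta_h^l|^p\, \d\M=\int_{\S_h}|\eta_h|^p\, \mu_h\, \d\M_h$ where the surface Jacobian $\mu_h$ of the lift can be expressed as $\mu_h(x)=\frac{1}{\nu(P)\cdot\nu_h(x)}\prod_{i=1}^{d}(1-\delta(x)\kappa_i(P))$, with $\kappa_i$ the principal curvatures of $\S$. Because $\S$ is smooth and compact, the $\kappa_i$ are bounded, and combined with $|\delta|\le Ch^2$ and the uniform angle bound on $\nu\cdot\nu_h$, one gets $\mu_h\to 1$ uniformly. Hence there exist constants, depending only on $\S$, such that $c^{-1}\le \mu_h\le c$ on $\S_h$ for all $h\le h_0$, which gives the first double inequality.

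For the $W^{1,p}$ equivalence the new ingredient is to control $\nbs \eta_h^l$ in terms of $\nbsh \eta_h$. Using the defining relation $\eta_h=\eta_h^l\circ P$ and differentiating, one derives a pointwise identity of the form $\nbsh\eta_h(x)=A_h(x)\,(\nbs \eta_h^l)(P(x))$, where $A_h$ is a linear map built from $I$, the Weingarten map $\delta\, H$ of $\S$, and the projection that exchanges the normals $\nu$ and $\nu_h$ (this is the calculation carried out in Dziuk \cite{Dziuk88} and refined by Demlow). The geometric estimates above imply that $A_h$ and $A_h^{-1}$ are bounded in operator norm uniformly in $x$ and $h$, so combining with the $L^p$ step yields the second double inequality. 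The main obstacle, and the reason this result is nontrivial, is precisely the construction and uniform invertibility of $A_h$: one has to track how the tilt between $\nu$ and $\nu_h$ interacts with the curvature terms so that no factor blows up as $h\to 0$, which is where the admissibility (uniform non-degeneracy) of the triangulation $\Tcalh$ enters.

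Finally, I would remark that the admissibility assumption on $\Tcalh$, together with smoothness of $\S$, ensures the estimates $|\delta|\le Ch^2$ and $|\nu-\nu_h|\le Ch$ hold \emph{uniformly} over all simplices $T\in\Tcalh$, so that the constants $c,h_0$ depend only on $\S$ and on the shape-regularity constant of the family $\{\Tcalh\}$, not on $h$. This completes the outline; the full argument is exactly the one given in \cite{Demlow, Dziuk88}.
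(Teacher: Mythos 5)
Your outline is correct and coincides with the paper's treatment: the paper gives no proof of this lemma at all, but simply cites \cite{Demlow, Dziuk88}, and your change-of-variables argument (uniform bounds on the surface Jacobian of the lift via $|\delta|\le Ch^2$, $|\nu-\nu_h|\le Ch$, plus uniform invertibility of the gradient-transformation map $A_h$) is exactly the standard argument contained in those references. The only cosmetic point is the precise placement of the factor $\nu\cdot\nu_h$ in the area-element formula, which is immaterial here since that factor is uniformly close to $1$ for $h\le h_0$.
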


\subsubsection{The discrete problem}
\label{section: discrete problem}
In order to find the approximate solution of \eqref{mixbvpex}, we have to solve the counterpart of \eqref{wsint}--\eqref{wsbdy} in $\vh$.
We note here that in order to transfer the assumptions on the coefficient functions (A1)--(A4) to the discrete level we have to use negative lifts in the spatial variable. For any $x\in\S_h$ with corresponding lift $P\in\S$, cf.\ \eqref{eq: lift defining equation}, we set
\begin{equation*}
b^{-l}(x,\cdot,\cdot) = b(P,\cdot,\cdot) ,
\end{equation*}
and similarly using the negative lift in the first argument of all coefficient functions ($f$, $q$, $r$, etc.), as well. No lift is used in the other arguments. By this construction all the properties of (A1)--(A4) are clearly transferred to these lifted functions, with $\S_h$ instead of $\S$.

Following \eqref{mixbvpexsubtr}, we first subtract $q^{-l}(x,0)$ from both sides and consider the following discrete problem: find $\uh\in\vh$ such that
\begin{equation}
\label{mixbvph}
\int_{\S_h} \big( b^{-l}(x, \uh, \nbsh \uh)\ \nbsh \uh\cdot \nbsh \vhh + \hat q^{-l}(x,\uh)\vhh \big)
= \int_{\S_h} \hat f^{-l} \vhh   \qquad  (\forall \vhh\in\vhn),
\end{equation}
\begin{equation}
\label{pfth}
\hbox{and} \qquad  u_h-g_h\in \vhn
\end{equation}
where
\begin{equation*}
\hat q^{-l}(x,z):=q^{-l}(x,z)-q^{-l}(x,0), \qquad \hat f^{-l}(x):=f^{-l}(x)-q^{-l}(x,0)
\end{equation*}
for all $x\in\S_h$, $z\in\er$. Further, denote
\begin{equation*}
r^{-l}(x,z):=\left\{
\begin{aligned}
& \frac{q^{-l}(x,z)-q^{-l}(x,0)}{z},  &\qquad& \hbox{if} \ z >  0 , \\
& 0, &\qquad& \hbox{if} \ z \le 0.
\end{aligned}
\right.
\end{equation*}
Then, using also assumption (A4),
\begin{equation}
\label{rnov}
\hat q^{-l}(x,z)=r^{-l}(x,z)\, z\, ,
\qquad
0\le r^{-l}(x,z)\le \alpha   + \beta |z|^{p_1-2} \qquad (\forall x\in{\S_h}, \ z\in\er).
\end{equation}
In what follows, we will rewrite \pr \ (\ref{mixbvph})
as 
\begin{equation}
\label{bvphrew}
\int_{\S_h} \big( b^{-l}(x, \uh, \nbsh \uh)\ \nbsh \uh\cdot \nbsh \vhh +
\, r^{-l}(x,\uh)\uh\vhh \big)
=
\int_{\S_h} \hat f^{-l} \vhh
\qquad  (\forall \vhh\in\vhn) .
\end{equation}
We set
\begin{equation}
\label{uhsum}
\uh =  \sum_{j=1}^{\bar n} c_j \chi_j,
\end{equation}
and look for the \coe s $c_1,c_2,\dotsc,c_{\bar n}$.

\subsubsection{The non\li \ \alb \ \sy}

Now we turn to the non\li \ \alb \ \sy \ \corr \ to \eqref{mixbvph}.
For any ${\bf \bar c} = (c_1,...,c_{\bar n})^T \in \er^{\bar n}$, $i= 1,2,..., n$ and
$j= 1,2,..., \bar n$, we set
\begin{gather*}
b_{ij}({\bf \bar c}) = \int_{\S_h} b^{-l}\Big(x, \sum_{k=1}^{\bar n} c_k \chi_k, \,  \sum_{k=1}^{\bar n} c_k \nbsh \chi_k \Big) \ \nbsh \chi_j \cdot \nbsh \chi_i , \\
r_{ij}({\bf \bar c}) = \int_{\S_h} r^{-l}\Big(x, \sum_{k=1}^{\bar n} c_k \chi_k \Big) \chi_j  \chi_i , \qquad
d_{i} = \int_{\S_h} \hat f^{-l} \chi_i,
\end{gather*}
\begin{equation*}
a_{ij}({\bf \bar c}) = b_{ij}({\bf \bar c}) + r_{ij}({\bf \bar c})  .
\end{equation*}
Putting (\ref{uhsum}) and $\vhh=\chi_i$ into (\ref{bvphrew}), we obtain
the $n\times\bar n$ \sy \ of \alb \ \eq s
\begin{equation}
\label{sy}
\sum_{j=1}^{\bar n} a_{ij}({\bf \bar c}) \, c_j = d_i, \quad i =1,2,...,n.
\end{equation}
Using the notations
\begin{equation*}
\begin{aligned}
{\bf A}({\bf \bar c})=&\ \{a_{ij}({\bf \bar c})\},
\quad \hbox{and} \quad {\bf d} = \{d_j\}, \;  {\bf c} = \{c_{j}\},
& \qquad & i=1,2,...,n, \ j=1,2,...,n, \\
{\bf \tilde A}({\bf \bar c})=&\ \{a_{ij}({\bf \bar c})\},
\quad \hbox{and} \quad {\bf \tilde c} = \{c_{j}\},
& \qquad & i =1,2,...,n, \ j=n+1,...,\bar n ,
\end{aligned}
\end{equation*}
the system (\ref{sy}) turns into
\begin{equation}
\label{syrov}
{\bf A}({\bf \bar c}){\bf c} + {\bf \tilde A}({\bf \bar c}){\bf \tilde c} = {\bf d}.
\end{equation}
Defining further
\begin{equation}
\label{defabar}
{\bf \bar A(\bar c)} = \left[ \begin{matrix} {\bf A(\bar c)} & {\bf \tilde A (\bar c)} \end{matrix}
\right],  \qquad  {\bf \bar c} =  \left[ \begin{matrix} {\bf c} \\ {\bf \tilde c}\end{matrix}
\right],
\end{equation}
we rewrite (\ref{syrov}) as follows
\begin{equation*}
{\bf \bar A(\bar c)}{\bf \bar c} = {\bf d}.
\end{equation*}

In order to obtain a \sy \ with a square  matrix,  we enlarge our \sy \  to an $\bar n\times\bar n$ one. Namely, since $\uh  =   \gh$ on $\pa\S_h$, the coordinates $c_i$ with $ n+1\le i\le \bar n$ satisfy automatically $c_i=g_i$, i.e.\
$$
{\bf \tilde c} = {\bf \tilde g},
$$
where
$$
{\bf \tilde g} = \{g_{j}\},  \quad j=n+1,...,\bar n .
$$
That is, we can replace (\ref{syrov}) by the equivalent \sy
\begin{equation*}
\left[\begin{matrix}
{\bf A}({\bf \bar c}) & {\bf \tilde  A} ({\bf \bar c}) \\
{\bf 0} & {\bf I}
\end{matrix}\right]
\left[\begin{matrix}
{\bf c} \\
{\bf \tilde c}
\end{matrix}\right]
=
\left[\begin{matrix}
{\bf d} \\ {\bf \tilde g}
\end{matrix}\right]  \ .
\end{equation*}

\begin{remark}
	\label{remark: smooth surfaces}
	\rm
	The solution of the arising nonlinear finite element problems can rely on various efficient methods, see e.g. \cite{book}.
\end{remark}

\section{Discrete maximum principles}

\subsection{Classical matrix maximum principles}

\sm
Let us consider a linear algebraic system of equations of order $(n+m) \times (n+m)$:
\begin{equation*}
{\bf \bar A} {\bf \bar c} = {\bf \bar b},
\end{equation*}
where the matrix ${\bf \bar A}$ has the following structure:
\begin{equation}
\label{lin_sys_str}
{\bf \bar A} =
\left[\begin{matrix}
{\bf A} & {\bf \tilde  A} \\ {\bf 0} & {\bf I}
\end{matrix} \right] \ .
\end{equation}
In the above, ${\bf I}$ is an $m \times m$ identity matrix, ${\bf 0}$ is a $m \times n$ zero matrix. In a (surface) finite element setting, such a partitioning arises corresponding to interior and boundary points.

We first recall some classical   definitions and results, see, e.g.\  \cite{Ci70, Varga}. We follow the terminology of \cite{FI10}. Throughout, inequalities for matrices or vectors are understood elementwise, and $\bf c$, $\bf \tilde c$ and $\bf \bar c$  denote    vectors consisting of $n$, $m$ or $n+m$ numbers, \resp.

\begin{definition}
	\label{dwmp}
	\rm The matrix ${\bf \bar A}$ in (\ref{lin_sys_str}) satisfies
	
	\me
	(a) the {\it discrete weak maximum principle (DwMP)} if for an arbitrary vector ${\bf \bar c} = (c_1, ..., c_{n+m})^T \in \er^{n+m}$ satisfying $({\bf \bar A} {\bf \bar c})_i \le 0, \ i =1,2,..., n$, one has
	\begin{equation*}
	\max\limits_{i =1,2,..., n+m} c_i \, \le \, \max \{ 0,
	\max\limits_{i =n+1,..., n+m}  c_i \};
	\end{equation*}
	
	(b) the {\it discrete strict weak maximum principle (DWMP)} if for any vector ${\bf \bar c} = (c_1, ..., c_{n+m})^T \in \er^{n+m}$ satisfying $({\bf \bar A} {\bf \bar c})_i \le 0, \ i =1,2,..., n$, one has
	\begin{equation*}
	\max\limits_{i =1,2,..., n+m} c_i \, = \, \max\limits_{i =n+1,..., n+m}  c_i .
	\end{equation*}
\end{definition}
(DMPs without the term 'weak' also assert that only constant vectors may attain a maximum for 'interior' indices, but we do not address this property here.)

\sm

Based on \cite{CiRav,  FI10} we have the following result, formulated in this way in \cite{KK14}:
\begin{theorem}
	\label{thmmpposdef}
	If the matrix ${\bf \bar A}$ in (\ref{lin_sys_str}) satisfies the following conditions:
	\begin{itemize}
		\item[(i)]  $a_{ij}\le 0$ \qquad ($\forall i=1,\dots,n, \ j=1,\dots, n+m; \ \  i\neq j$),
		\item[(ii)]  $\sum\limits_{j=1  }^{n+m}  a_{ij} \ge 0$ \qquad ($\forall i=1,\dots,n$),
		\item[(iii)] ${\bf A}$ is positive definite,
	\end{itemize}
	then ${\bf \bar A}$  possesses the DwMP.
	
	\me
	If the inequality in condition (ii) is replaced by equality, then ${\bf \bar A}$ possesses the DWMP.
\end{theorem}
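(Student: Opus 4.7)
The plan is to reduce the DwMP to a scalar inequality ${\bf d}^T {\bf A}{\bf d}\le 0$ for a suitably constructed nonnegative vector ${\bf d}$, so that the positive definiteness in (iii) forces ${\bf d}={\bf 0}$, which will express exactly the desired maximum bound.

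Concretely, I would first set $M:=\max\{0,\max_{i=n+1,\ldots,n+m} c_i\}$ and shift by introducing ${\bf \bar c}^*:={\bf \bar c}-M{\bf \bar e}$. Because $M\ge 0$ and condition (ii) gives $({\bf \bar A}{\bf \bar e})_i=\sum_j a_{ij}\ge 0$ for $i\le n$, we obtain $({\bf \bar A}{\bf \bar c}^*)_i\le({\bf \bar A}{\bf \bar c})_i\le 0$ for $i=1,\ldots,n$; by the choice of $M$, $\bar c^*_i\le 0$ for $i>n$. The claim now becomes: $\bar c^*_i\le 0$ for every $i\le n$ as well.

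Second, split the interior part of ${\bf \bar c}^*$ into positive and negative parts: for $j=1,\ldots,n$, write $\bar c^*_j=d_j-e_j$ with $d_j:=(\bar c^*_j)^+$, $e_j:=(\bar c^*_j)^-$, so $d_j,e_j\ge 0$ and $d_j e_j=0$. Multiply the inequality $({\bf \bar A}{\bf \bar c}^*)_i\le 0$ by $d_i$ and sum over $i=1,\ldots,n$. In the expansion $\sum_{i=1}^{n}\sum_{j=1}^{n+m}d_i a_{ij}\bar c^*_j\le 0$, the terms with $j>n$ satisfy $d_i\ge 0$, $a_{ij}\le 0$ (by (i)), and $\bar c^*_j\le 0$, so each such term is nonnegative and may be discarded while preserving the inequality. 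What remains is
\begin{equation*}
{\bf d}^T {\bf A}{\bf d} \;=\; \sum_{i,j=1}^n d_i a_{ij} d_j \;\le\; \sum_{i,j=1}^n d_i a_{ij} e_j .
\end{equation*}
On the right-hand side the complementarity $d_i e_i=0$ kills the diagonal, and each remaining off-diagonal term has $d_i\ge 0$, $e_j\ge 0$, $a_{ij}\le 0$, so the sum is $\le 0$. Hence ${\bf d}^T{\bf A}{\bf d}\le 0$, and (iii) forces ${\bf d}={\bf 0}$, i.e.\ $\bar c^*_j\le 0$ for all $j\le n$, which is exactly the DwMP.

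For the strict version I would repeat the argument with $M:=\max_{i=n+1,\ldots,n+m}c_i$ (no truncation at $0$). The shift step still goes through because equality in (ii) gives $({\bf \bar A}{\bf \bar e})_i=0$ regardless of the sign of $M$, and the remainder of the proof is unchanged, yielding $\max_i c_i\le M$; the reverse inequality is trivial. The main obstacle I anticipate is purely bookkeeping — keeping the index ranges $i\le n$ vs.\ $i>n$, $j\le n$ vs.\ $j>n$, and the sign patterns of $a_{ij}$, $d_i$, $e_j$, $\bar c^*_j$ coherently aligned so that the correct terms are dropped — rather than any deep idea; the positive-definiteness step itself is immediate once ${\bf d}^T{\bf A}{\bf d}\le 0$ has been assembled.
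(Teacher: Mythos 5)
Your proof is correct: the shift by $M\bar{\bf e}$ is legitimate under (ii), the test-vector choice ${\bf d}=((\bar c^*)^+_1,\dots,(\bar c^*)^+_n)$ together with the sign pattern from (i) and $\bar c^*_j\le 0$ for $j>n$ yields ${\bf d}^T{\bf A}{\bf d}\le 0$, and (iii) then forces ${\bf d}={\bf 0}$; the DWMP variant with $M=\max_{i>n}c_i$ and exact row sums also goes through. Note, however, that the paper itself gives no proof of this theorem: it is quoted as a known algebraic result from Ciarlet--Raviart, Farag\'o and Kar\'atson--Korotov, where the standard route is via matrix monotonicity --- one shows that ${\bf A}$, being a Z-matrix (off-diagonal entries $\le 0$) with positive definite quadratic form, is a nonsingular M-matrix, hence ${\bf A}^{-1}\ge 0$ and $-{\bf A}^{-1}\tilde{\bf A}\ge 0$ elementwise, and the maximum bound follows from the row-sum conditions. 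Your argument is a genuinely different and arguably more elementary path: it is the discrete analogue of the Stampacchia truncation proof of the weak maximum principle (testing with $(u-M)^+$), it never needs inverse-nonnegativity or any M-matrix theory, and it uses (iii) only through the quadratic form ${\bf d}^T{\bf A}{\bf d}>0$ for ${\bf d}\neq{\bf 0}$, which is exactly the (possibly nonsymmetric) notion of positive definiteness verified for the stiffness matrix in this paper; what the classical route buys in exchange is the stronger structural information ${\bf A}^{-1}\ge 0$ (discrete monotonicity), which your argument neither needs nor delivers.
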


\subsection{The discrete maximum principle for the non\li \ \ell \ \pr }

\subsubsection{The main results}

The derivation of our main results needs  the following lemmas on the stability of the boundary data and then on the norm-boundedness of the SFEM solutions. 
This means that we can achieve  stability of the discrete problem, thanks to exploiting  the lift properties of subsection
\ref{subsliftprop} to control the geometric error due to the surface approximation.

\begin{lemma}
	\label{lemma: interp stab}
	Let $g^*$ and $\gamma$ be as given in Assumption \ref{subsubpr} (A2). Then there exists a constant $c > 0$ such that for all $h\leq h_0$ with a $h_0>0$ sufficiently small, the discrete function $g_h$, from \eqref{ghdef}, satisfies
	\begin{equation}
	\label{ghapprox}
	\|g_h\|_{1,p,\S_h} \leq  \ c  \|g^\ast \|_{1,\gamma,\S} \, .
	\end{equation}
\end{lemma}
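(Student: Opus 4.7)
The plan is to reduce the discrete estimate to one on the smooth manifold $\S$ via the norm-equivalence Lemma~\ref{lemma: equivalence of norms}, and then to combine a Morrey--Sobolev embedding on $\S$ with standard nodal interpolation estimates on the shape-regular triangulation $\Tcalh$. The starting observation is that Assumption~\ref{subsubpr}(A2) forces $\gamma>d$ in every case (either $\gamma\ge p>d$, or $\gamma>d\ge p$). Hence, on the compact Riemannian manifold $\S$, one has the embedding $W^{1,\gamma}(\S)\hookrightarrow C^{0,\sigma}(\overline{\S})$ with H\"older exponent $\sigma=1-d/\gamma>0$, yielding the uniform pointwise bound
\[
|g^*(P)|\,\le\,c_0\,\|g^*\|_{1,\gamma,\S}\qquad(\forall P\in\overline{\S})
\]
together with a H\"older-type modulus of continuity for $g^*$. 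In particular, each nodal coefficient $g_j=g^*(P(B_j))$ appearing in \eqref{ghdef} is automatically bounded by $c_0\,\|g^*\|_{1,\gamma,\S}$, which disposes of the $L^p$ part of \eqref{ghapprox} almost immediately: since $g_h$ is supported on the boundary strip $\omega_h:=\bigcup\{T\in\Tcalh : T\cap\partial\S_h\ne\emptyset\}$, summing the trivial elementwise bound $\|g_h\|_{L^p(T)}\le c\,h_T^{d/p}\|g^*\|_{1,\gamma,\S}$ over the $O(h^{1-d})$ strip simplices gives $\|g_h\|_{L^p(\S_h)}\le c\,h^{1/p}\|g^*\|_{1,\gamma,\S}$ uniformly in $h$.

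For the $W^{1,p}$ part I would compare $g_h$ with the full Lagrange interpolant $I_h g^{*,-l}\in V_h$ of the negative lift of $g^*$. Pointwise evaluation of $g^{*,-l}$ at the nodes is legitimate because $\gamma>d$, so standard interpolation stability together with the norm equivalence of Lemma~\ref{lemma: equivalence of norms} applied in the $W^{1,\gamma}$ norm yields
\[
\|I_h g^{*,-l}\|_{1,p,\S_h}\,\le\,c\,\|g^{*,-l}\|_{1,\gamma,\S_h}\,\le\,c'\,\|g^*\|_{1,\gamma,\S}.
\]
By construction, the correction $g_h-I_h g^{*,-l}$ vanishes at every boundary node of $\S_h$, is supported on the strip $\omega_h$, and its nodal values at interior vertices of $\omega_h$ are precisely $-g^{*,-l}(B_i)$.

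The main technical obstacle is controlling this correction in $W^{1,p}(\S_h)$: a direct scaling argument produces a spurious negative power of $h$, and the estimate only closes if one exploits that each such interior vertex $B_i$ lies within distance $O(h)$ of $\partial\S_h$, combined with the Morrey H\"older continuity of $g^*$ with exponent $\sigma=1-d/\gamma>0$. This is where the assumed regularity $g^*\in W^{1,\gamma}$ (rather than merely $W^{1,p}$) genuinely enters, since the extra H\"older exponent compensates for the elementwise gradient blow-up and the $(d-1)$-dimensional character of~$\omega_h$. Once this correction has been bounded uniformly, adding the two contributions yields \eqref{ghapprox} with a constant $c$ independent of $h\le h_0$.
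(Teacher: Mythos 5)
Your toolkit is the right one, and two of your three steps coincide with the paper's actual argument: the observation that (A2) forces $\gamma\ge p$ and $\gamma>d$ in all cases, the resulting continuity (so nodal evaluation and Lagrange interpolation stability in $W^{1,\gamma}$ make sense, via \cite[Thm.~3.1.6]{Ci}), and the lift norm-equivalence of Lemma~\ref{lemma: equivalence of norms} are precisely the three ingredients the paper chains together. The paper, however, applies this chain \emph{directly to $g_h$}, i.e.\ it estimates $\|g_h\|_{1,p,\S_h}\le c_1\|g_h\|_{1,\gamma,\S_h}\le c_2\|(g^*)^{-l}\|_{1,\gamma,\S_h}\le c_3\|g^*\|_{1,\gamma,\S}$, treating $g_h$ as the nodal interpolant of $(g^*)^{-l}$ on the elements where it lives; no correction term ever appears. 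Your $L^p$ estimate for the zero-extended $g_h$ and your bound $\|I_h(g^*)^{-l}\|_{1,p,\S_h}\le c\|g^*\|_{1,\gamma,\S}$ are both fine.

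The genuine gap is your final step. First, the correction $w_h:=g_h-I_h(g^*)^{-l}$ is \emph{not} supported on the boundary strip: away from the strip it equals $-I_h(g^*)^{-l}$. More seriously, its nodal values at the interior vertices of strip elements are the full values $-(g^*)^{-l}(B_i)$, not differences of values of $g^*$, so the Morrey--H\"older modulus (which controls $|g^*(x)-g^*(y)|\le C|x-y|^{\sigma}$) combined with $\mathrm{dist}(B_i,\pa\S_h)=O(h)$ yields no smallness at all. Take $g^*\equiv 1$: then $w_h$ has nodal value $-1$ at every interior strip vertex, its gradient on strip elements is of size $h^{-1}$, and since the strip has measure $O(h)$ one gets $\|\nbsh w_h\|_{p,\S_h}\sim h^{1/p-1}\to\infty$ for $p\ge2$. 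The ``spurious negative power of $h$'' you mention is therefore real, not an artifact of crude estimation, and no modulus-of-continuity argument can remove it; indeed this computation shows that the literal zero-extension in \eqref{ghdef} does not satisfy a bound of type \eqref{ghapprox} at all. The estimate \eqref{ghapprox} is obtained only by the paper's route, i.e.\ by reading $g_h$ as the Lagrange interpolant of $(g^*)^{-l}$ (at all vertices of the elements involved), so that interpolation stability in $W^{1,\gamma}$ applies directly — not by comparing the zero-extension with the interpolant and trying to absorb the discrepancy.
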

\begin{proof}
	The assumption on $\gamma$ in (A2)
	implies that in each case we have $\gamma\ge p$ and $\gamma>d$. Since the compact manifold $S_h$ has bounded measure, the relation $\gamma\ge p$ implies that
	$$
	\|g_h\|_{1,p,\S_h} \leq  \ c_1 \|g_h\|_{1,\gamma,\S_h}
	$$
	for some constant $c_1>0$ independent of
	$h$.
	Further, the relation $\gamma>d$ implies  that
	${W^{1,\gamma}(\S_h)}\subset C(\S_h)$, see, e.g., \cite{Ad}. Hence, by \cite[Thm. 3.1.6]{Ci}, the stability of the Lagrange interpolation holds for $(g^\ast)^{-l}$ in $W^{1,\gamma}$-norm on each triangle and hence also on all $\S_h$, therefore
	$$
	\|g_h\|_{1,\gamma,\S_h} \leq  \ c_2  \|(g^\ast)^{-l} \|_{1,\gamma,\S_h}\,
	$$
	for some constant $c_2>0$ independent of $h$.
	Finally,
	Lemma \ref{lemma: equivalence of norms}
	implies that
	$$
	\|(g^\ast)^{-l} \|_{1,\gamma,\S_h} \leq  \ c_3  \|g^\ast \|_{1,\gamma,\S}
	$$
	for some constant $c_3>0$ independent of   $h$. These three estimates together yield the desired result. 
\end{proof}

\begin{lemma}
	\label{lemuhbd}
	The norms 
	$\|\uh \|_{1,p,\S_h}$
	are 
	bounded independently of $h$.
\end{lemma}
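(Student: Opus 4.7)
The plan is to test the discrete equation (\ref{bvphrew}) against $v_h = u_h - g_h$, which lies in $\vhn$ by (\ref{pfth}). Substituting and rearranging yields the identity
\begin{equation*}
\int_{\S_h} b^{-l}|\nbsh u_h|^2 + \int_{\S_h} r^{-l} u_h^2 = \int_{\S_h} b^{-l}\, \nbsh u_h \cdot \nbsh g_h + \int_{\S_h} r^{-l} u_h g_h + \int_{\S_h} \hat f^{-l}\,(u_h - g_h).
\end{equation*}
The left-hand side is bounded below by $\mu_1\|\nbsh u_h\|_{p,\S_h}^p$ via the coercivity in (A3), plus the nonnegative term $\int r^{-l} u_h^2$ from (\ref{rnov}); in fact (A3) even yields the stronger lower bound $\mu_0\|\nbsh u_h\|_{2,\S_h}^2 + \mu_1\|\nbsh u_h\|_{p,\S_h}^p$, which I would keep in reserve for the hardest term.

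Next I would bound the three right-hand terms. The flux term is handled by the upper bound of (A3), together with H\"older and Young's inequality, yielding $\epsilon\|\nbsh u_h\|_{p,\S_h}^p + C_\epsilon\|g_h\|_{1,p,\S_h}^p$, with $\|g_h\|_{1,p,\S_h}$ controlled by Lemma~\ref{lemma: interp stab}. The source term $\int \hat f^{-l}(u_h-g_h)$ is estimated by H\"older combined with a discrete Poincar\'e inequality on $\vhn$ (inherited from the continuous Poincar\'e on $\wep$ via lifting and Lemma~\ref{lemma: equivalence of norms}). The delicate term $\int r^{-l} u_h g_h$ I would treat using the weighted Cauchy--Schwarz inequality
\begin{equation*}
\int r^{-l} u_h g_h \le \tfrac{1}{2}\int r^{-l} u_h^2 + \tfrac{1}{2}\int r^{-l} g_h^2,
\end{equation*}
absorbing the first piece on the left. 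For $\int r^{-l} g_h^2$ I would combine the growth bound $r^{-l}\le \alpha + \beta|u_h|^{p_1-2}$ from (\ref{rnov}) with two key ingredients: (i)~the uniform $L^\infty(\S_h)$-bound on $g_h$, which follows from $g^*\in W^{1,\gamma}(\S)\hookrightarrow C(\overline{\S})$ (since $\gamma > d$ by (A2)), transferred to $\S_h$ through Lemmas~\ref{lemma: equivalence of norms} and~\ref{lemma: interp stab}; and (ii)~the Sobolev embedding $W^{1,p}(\S_h)\hookrightarrow L^{p_1}(\S_h)$ guaranteed by the subcritical assumption $p_1 < \hatp$ in (A4), available via (\ref{Sobombas}) and the lift.

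Putting everything together and using Young's inequality to absorb $u_h$-dependent right-hand terms into the coercive $\mu_1\|\nbsh u_h\|_{p,\S_h}^p$ on the left, the estimate closes to give an $h$-uniform bound on $\|\nbsh u_h\|_{p,\S_h}$; a discrete Poincar\'e inequality and Lemma~\ref{lemma: interp stab} then upgrade this to the desired bound on the full norm $\|u_h\|_{1,p,\S_h}$.

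The principal obstacle will be the super-linear term $\beta|u_h|^{p_1-1}$ coming from $\hat q$, since $p_1-1$ can exceed $p$. After all H\"older, Sobolev, and Young estimates one arrives at an inequality of the form $\|\nbsh u_h\|_{p,\S_h}^p \le C_1 + C_2\|\nbsh u_h\|_{p,\S_h}^{\kappa}$, and the technical core of the proof is to verify that the subcritical condition $p_1<\hatp$, the $L^\infty$-bound on $g_h$, and (if needed) the auxiliary $\mu_0\|\nbsh u_h\|_{2,\S_h}^2$ bound together force $\kappa < p$, so that the estimate truly closes.
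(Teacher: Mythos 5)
Your proposal follows the paper's proof in all essentials: test \eqref{bvphrew} with $v_h=u_h-g_h\in V_h^0$ (allowed by \eqref{pfth}), use the lower bound in (A3) to get $\mu_1\|\nabla_{S_h}u_h\|_{p,S_h}^p$ on the left, bound the flux term via the upper bound in (A3) and H\"older, bound the source term via the $W^{1,p}_0(S_h)$ Poincar\'e--Sobolev estimate (the discrete version of \eqref{Sobombas}), and control every $g_h$-dependent quantity uniformly in $h$ by Lemma~\ref{lemma: interp stab}. The one genuinely different step is the coupling term $\int_{S_h} r^{-l}u_h g_h$: you split it by weighted Cauchy--Schwarz, absorb $\tfrac12\int r^{-l}u_h^2$ into the left, and bound $\int r^{-l}g_h^2$ using \eqref{rnov} together with an $h$-uniform $L^\infty$-bound on $g_h$ (legitimate, since $\gamma>d$ gives $g^*\in C(\overline{S})$ and the piecewise linear $g_h$ attains its extrema at nodes), leaving a worst power $\|u_h\|_{1,p,S_h}^{p_1-2}$; the paper instead writes $u_h=v_h+g_h$ and estimates the whole term by \eqref{rnov}, H\"older and the $L^{p_1}$-embedding, which leaves the power $p_1-1$. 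Concerning the step you defer as the ``technical core'': the paper settles it by nothing more than the degree comparison you anticipate --- the right-hand side is a polynomial in $|u_h|_{1,p,S_h}$ with $h$-uniformly bounded coefficients, asserted there to be of degree $p-1$, so the power $p$ on the left forces a uniform bound; no interpolation with your reserved $\mu_0\|\nabla_{S_h}u_h\|_{2,S_h}^2$ term is invoked. Your concern is not idle: the paper's displayed bound actually has degree $\max(p-1,\,p_1-1)$, so its one-line closure implicitly requires $p_1-1<p$, which (A4) by itself does not enforce; your Cauchy--Schwarz variant is in fact slightly more forgiving, closing whenever $p_1-2<p$. So: same skeleton, a mildly different (and somewhat more robust) treatment of one term, and the step you leave open is exactly the step the paper disposes of by assertion rather than by an argument you are missing.
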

\begin{proof}
	We will use the shorthand notation
	$$
	|v_h|_{1,p,\S_h} :=   \|\nbsh v_h\|_{p,\S_h}\,
	$$
	Since \eqref{pfth} shows that $v_h :=u_h-g_h\in \vhn$, we use it as a test function in the discrete weak problem \eqref{bvphrew}. Rearranging the terms, with $g_h$ on the right, we obtain
	\begin{align*}
	&\ \int_{\S_h} \!\!\! \big( b^{-l}(x, \uh, \nbsh \uh)\ {|\nbsh \uh|}^2 + r^{-l}(x, \uh) |\uh|^2 \big) \\
	=&\ \int_{\S_h} \!\!\! b^{-l}(x, \uh, \nbsh \uh) \nbsh \uh  \cdot \nbsh g_h
	+ \int_{\S_h} \!\!\! \Big( r^{-l}(x,\uh) \uh g_h +\, \hat f^{-l} (\uh-g_h)\Big)  .
	\end{align*}
	
	We estimate this equality appropriately from both above and below. For the lower bound of the left-hand side we use (A3), which yields $b^{-l}(x,\cdot,\xi)|\xi|^2\ge \mu_1|\xi|^{p}$ ($x\in\S_h$), and since $r^{-l}(x,\cdot)\ge 0$ ($x\in\S_h$), we obtain the lower bound
	\begin{equation}
	\label{uhpl}
	\mu_1 |u_h|_{1,p,\S_h}^p
	= \mu_1 \int_{\S_h}  |\nbsh u_h|^{p}
	\leq \int_{\S_h}   b^{-l}(x, \uh, \nbsh \uh)\ {|\nbsh \uh|}^2 + r^{-l}(x, \uh) |\uh|^2 .
	\end{equation}
	For the first term on the right-hand side, we have from (A3) that
	\begin{align*}
	\int_{\S_h} \!\!\! b^{-l}(x, \uh, \nbsh \uh) \nbsh \uh  \cdot \nbsh g_h
	\leq &\ \int_{\S_h} \!\!\!  (M_0|\nbsh u_h| + M_1|\nbsh u_h|^{p-1})\, |\nbsh g_h| \\
	\leq &\ M_0 |u_h|_{1,p',\S_h} |g_h|_{1,p,\S_h} + M_1 |u_h|_{1,p,\S_h}^{p-1}  |g_h |_{1,p,\S_h} \\
	\leq &\ c \big(M_0 |u_h|_{1,p,\S_h} + M_1 |u_h|_{1,p,\S_h}^{p-1} \big) \|g_h\|_{1,p,\S_h}
	\end{align*}
	using H\"older's inequality, with $\frac1p + \frac{1}{p'}=1$, $p'\le 2\le p$, and the boundedness of the surface.
	For the second term on the right-hand side, using \eqref{rnov}, we have
	\begin{align*}
	\int_{\S_h} \!\!\!\! \Big( r^{-l}(x,\uh) \uh g_h + \hat f^{-l} (\uh-g_h)\Big)
	= \! &\ \int_{\S_h} \!\!\! \Big( r^{-l}(x,v_h+g_h) (v_h+g_h) g_h +\, \hat f^{-l} v_h\Big) \\
	\leq \! &\ \int_{\S_h} \!\!\! \Big( ( \alpha + \beta |v_h+g_h|^{p_1-2}) (v_h+g_h) g_h + \hat f^{-l} v_h\Big) \\
	\leq \! &\ \big(\hat k_1 |v_h|_{1,p,\S_h}+ \|g_h\|_{1,p,\S_h} \big) \|g_h\|_{1,p,\S_h} \\
	&\ + \big( \hat k_1 |v_h|_{1,p,\S_h}   + \|g_h\|_{1,p,\S_h}\big)^{p_1-1} \|g_h\|_{1,p,\S_h} \\
	&\ + c\hat k_1 \|\hat f\|_{p',\S} |v_h|_{1,p,\S_h} ,
	\end{align*}
	where the last estimates is obtained, similarly as before, using repeatedly H\"older's inequality (with $p'\le 2\le p$) and again the boundedness of the surface, and the equivalent version of \eqref{Sobombas} on $W_0^{1,p}(\S_h)$:
	\begin{equation*}
	\|v\|_{p_1,\S_h}  \le \hat k_1 |v |_{1,p,\S_h},
	\qquad (\forall v\in W_0^{1,p}(\S_h)),
	\end{equation*}
	together with the bound $\|\hat f^{-l}\|_{p',\S_h} \leq c \|\hat f\|_{p',\S}$.
	This can be further estimated using
	$$
	|v_h|_{1,p,\S_h} \leq |u_h|_{1,p,\S_h} + |g_h |_{1,p,\S_h}  \leq |u_h|_{1,p,\S_h} + \|g_h\|_{1,p,\S_h}
	$$
	in each term. Altogether, we obtain
	\begin{align*}
	|u_h|_{1,p,\S_h}^p \leq &\ c\, \Big( \big( |u_h|_{1,p,\S_h} + 2\|g_h\|_{1,p,\S_h} \big) \|g_h\|_{1,p,\S_h} \\
	&\ + \big(|u_h|_{1,p,\S_h} + \|g_h\|_{1,p,\S_h} + \|g_h\|_{1,p,\S_h}\big)^{p_1-1} \|g_h\|_{1,p,\S_h} \\
	&\ + c \|\hat f\|_{p',\S} (|u_h|_{1,p,\S_h} + \|g_h\|_{1,p,\S_h}) \Big),
	\end{align*}
	a polynomial of degree $p-1$ of $|u_h|_{1,p,\S_h}$ on the right-hand side, whose \coe s, containing $\|\hat f\|_{p',\S}$ and $\|g_h\|_{1,p,\S_h}$, are bounded independently of $h$, owing to \eqref{ghapprox}. Further, we have a power $p$ of $|u_h|_{1,p,\S_h}$ on the left-hand side. Since the latter is bounded by the former, this implies that $|u_h|_{1,p,\S_h}$ is bounded. Then $|v_h|_{1,p,\S_h} \leq |u_h|_{1,p,\S_h} + |g_h|_{1,p,\S_h}$ is also bounded (using again \eqref{ghapprox} for the term with $g_h$). Here $v_h\in \vhn$, where the norms $\|\cdot \|_{1,p,\S_h}$ and $|\cdot|_{1,p,\S_h}$ are equivalent, hence $\|v_h\|_{1,p,\S_h}$  is also bounded by some $K>0$. Finally, due to Lemma~\ref{lemma: interp stab},
	$$
	\|u_h\|_{1,p,\S_h} \le \|v_h\|_{1,p,\S_h} + \|g_h\|_{1,p,\S_h}  \leq K + c \|g^*\|_{1,\gamma,\S}\, ,
	$$
	i.e. it is  also bounded independently of $h$.
\end{proof}

Now we can verify the underlying matrix maximum principle for the stiffness matrix of our \ell \ \pr .
\begin{theorem}
	\label{thdmp1}
	Let (A1)--(A4) hold and let
	us consider a family of simplicial triangulations ${\cal T}_h$ ($h>0$) satisfying the following property: for any $i =1,2,..., n, \ j =1,2,...,\bar n  \ (i\neq j)$
	\begin{equation}
	\label{nbacute}
	\nbsh \chi_i \cdot \nbsh \chi_j \le -\frac{\sigma_0}{h^2}<0
	\end{equation}
	on $\supp \chi_i \ \cap \ \supp \chi_j \subset \S_h$ with $\sigma_0>0$ independent of  $i,j$     and $h$.
	Let the simplicial triangulations ${\cal T}_h$ be regular, i.e.\ there exist constants $m_1, m_2>0$ \sth \ for any $h>0$ and any simplex $T_h\in {\cal T}_h$
	\begin{equation}
	\label{meshreg}
	m_1 h^d \le \meas(T_h) \le m_2 h^d \,
	\end{equation}
	(where $\meas(T_h)$ denotes the $d$-\dim al measure of $T_h$).
	
	\bi
	
	Then for sufficiently small $h$, the matrix ${\bf \bar A(\bar c)}$
	defined in (\ref{defabar}) has the following properties:
	\begin{enumerate}
		\item[(i)] \ $a_{ij}({\bf \bar c}) \le 0, \quad i =1,2,\dotsc, n, \ j =1,2,\dotsc,\bar n$ such that $i\neq j$.
		\item[(ii)] \ $\sum\limits_{j=1}^{\bar n} a_{ij}({\bf \bar c})\ge 0, \quad i =1,2,\dotsc,n$.
		\item[(iii)] \ ${\bf A}(\bar c)$ is positive definite.
	\end{enumerate}
\end{theorem}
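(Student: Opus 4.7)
The plan is to treat each of the three properties separately, starting with the two easy ones. Throughout, I will write $a_{ij}(\bar{\bf c}) = b_{ij}(\bar{\bf c}) + r_{ij}(\bar{\bf c})$ and exploit that the nodal basis $\{\chi_j\}$ is a partition of unity on $\S_h$, so $\sum_{j=1}^{\bar n}\chi_j\equiv 1$ and $\sum_{j=1}^{\bar n}\nbsh\chi_j\equiv 0$.

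For property (ii) I would insert the partition-of-unity identities directly under the integral sign. The gradient identity kills the $b$-contribution, $\sum_j b_{ij}(\bar{\bf c})=0$, and the mass-like part gives $\sum_j r_{ij}(\bar{\bf c})=\int_{\S_h}r^{-l}(x,\uh)\chi_i\ge 0$ by \eqref{rnov}. For property (iii), I would identify ${\bf v}^T{\bf A}(\bar{\bf c}){\bf v}$ with $\int_{\S_h}(b^{-l}|\nbsh v_h|^2+r^{-l}v_h^2)$ for $v_h=\sum_{i=1}^n v_i\chi_i\in\vhn$. Using (A3) this is bounded below by $\mu_0\|\nbsh v_h\|_{2,\S_h}^2$, which is zero only when $v_h$ is piecewise constant on $\S_h$; since $v_h$ vanishes on $\pa\S_h$, a connectedness argument on the triangulation forces ${\bf v}=0$.

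The real work is in (i). Fix $i\neq j$ with $i\le n$. The idea is to show that $b_{ij}$ is strictly negative of order $h^{d-2}$, while $r_{ij}$ is a higher-order positive perturbation, so that the sum has the sign of $b_{ij}$ for small $h$. For $b_{ij}$, the lower bound $b^{-l}\ge\mu_0>0$ from (A3) combined with hypothesis \eqref{nbacute} and the regularity bound \eqref{meshreg} yields
\[
    b_{ij}(\bar{\bf c})\;\le\;-\mu_0\,\frac{\sigma_0}{h^2}\,\meas\bigl(\supp\chi_i\cap\supp\chi_j\bigr)\;\le\;-C_1\,h^{d-2},
\]
with $C_1>0$ independent of $h$ (the intersection contains at least one simplex of the regular mesh). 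For $r_{ij}$, I would apply \eqref{rnov} to get $r_{ij}\le\alpha\int\chi_i\chi_j+\beta\int|\uh|^{p_1-2}\chi_i\chi_j$. The first term is trivially $\le c\,h^d$. For the second, a H\"older split with exponents chosen so that $|\uh|^{(p_1-2)q}$ sits in $L^{p^\ast}(\S_h)$, combined with $\chi_i\chi_j\le 1$ and $\meas(\supp\chi_i\cap\supp\chi_j)\le c h^d$, gives
\[
    \int_{\S_h}|\uh|^{p_1-2}\chi_i\chi_j \;\le\; \|\uh\|_{p^\ast,\S_h}^{p_1-2}\,\bigl(\meas(\supp\chi_i\cap\supp\chi_j)\bigr)^{1-(p_1-2)/p^\ast}\;\le\;C\,h^{d-d(p_1-2)/p^\ast}.
\]
The $L^{p^\ast}$-norm of $\uh$ is uniformly bounded via the surface Sobolev embedding \eqref{Sobombas} (lifted to $\S_h$ through Lemma \ref{lemma: equivalence of norms}) and the uniform $W^{1,p}$-bound of Lemma \ref{lemuhbd}.

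The main obstacle — and the place where Assumption (A4) is used precisely — is comparing these two exponents. The required inequality $r_{ij}\le|b_{ij}|$ for all sufficiently small $h$ reduces to
\[
    d-\frac{d(p_1-2)}{p^\ast}\;>\;d-2,\qquad\text{i.e.,}\qquad p_1\;<\;2+\tfrac{2p}{d-p}\;=\;\tfrac{2d}{d-p}\;=\;\hat p,
\]
which is exactly the restriction in (A4). Once this is in hand, $a_{ij}(\bar{\bf c})=b_{ij}(\bar{\bf c})+r_{ij}(\bar{\bf c})\le -C_1 h^{d-2}+C_2 h^{d-d(p_1-2)/p^\ast}\le 0$ for all $h\le h_1$ with $h_1$ sufficiently small, uniformly in $i,j$ (since the constants depend only on $\mu_0,\sigma_0,m_1,\alpha,\beta$, and the uniform bound on $\uh$). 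In the case $d\le p$ or $p_1=2$ the embedding gives bounded $\uh$ in $L^{p_1}$ for any $p_1<\infty$ and the argument is even simpler. This establishes (i) and completes the proof.
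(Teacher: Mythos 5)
Your proposal is correct and follows essentially the same route as the paper: parts (ii) and (iii) via the partition of unity and the ellipticity bound $b\ge\mu_0$, and part (i) by combining \eqref{nbacute} with $b\ge\mu_0$ for the negative gradient contribution and estimating the $r$-term through \eqref{rnov}, H\"older's inequality, the Sobolev embedding \eqref{Sobombas} and the uniform bound of Lemma \ref{lemuhbd}, with the strict inequality $p_1<\hat p$ from (A4) providing exactly the needed gap in the powers of $h$. The only (harmless) deviation is bookkeeping: you compare explicit powers $h^{d-2}$ versus $h^{d-d(p_1-2)/p^\ast}$, using also an upper bound $\meas(\supp\chi_i\cap\supp\chi_j)\le ch^d$ (which needs the bounded vertex valence implied by the assumed shape regularity), whereas the paper factors out $\meas(\S_h^{ij})^{1/s}$ with a subcritical H\"older exponent and only uses the lower bound in \eqref{meshreg}.
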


\smallskip

\begin{proof}
	Let us recall that for any $i,=1,2,\dotsc, n$, $j=1,2,\dotsc,\bar n$,
	$$
	a_{ij}({\bf \bar c}) =   \int_{\S_h} \Big( b^{-l}(x, \uh,  \nbsh \uh) \nbsh \chi_i \cdot \nbsh \chi_j \,
	+ r^{-l}(x,\uh) \chi_i  \chi_j \Big) .
	$$
	Now we   prove  properties (i)--(iii).
	
	\bi
	(i) Let $i =1,2,\dotsc, n$, $j =1,2,\dotsc,\bar n$ with $i\neq j$ and let $\S_h^{ij}$ denote the interior of $\supp \chi_i \cap \supp \chi_j \subset \S_h$.
	If $\S_h^{ij}=\emptyset$ then
	$$
	a_{ij}({\bf \bar c})=0.
	$$
	If $\S_h^{ij}\neq\emptyset$ then we use property (A3), namely that $b(\cdot,u_h,\nbsh u_h) \geq \mu_0$, and \eqref{nbacute}, further,  $r(\cdot, \uh) \geq 0$ and the fact $0 \leq \chi_i \leq 1$ ($i=1,2,\dotsc,\bar n$), which imply
	\begin{equation}
	\label{sigb1}
	a_{ij}({\bf \bar c}) \le  -\, \frac{\sigma_0}{h^2} \, \mu_0 \,
	\meas(\S_h^{ij}) + \int_{\S_h^{ij}} r^{-l}(x,\uh)
	.
	\end{equation}
	Here, from \eqref{rnov},
	$$
	\int_{\S_h^{ij}}r^{-l}(x,\uh) \leq \int_{\S_h^{ij}} \Bigl( \alpha + \beta |\uh|^{p_1-2}\Bigr) .
	$$
	We now further estimate the integral from above. From assumption (A4) we have $2\le p_1<\hat p$, where $\hatp=\frac{2\dimn}{\dimn-p}$ (if $d > p$)  and $\hat p := +\infty$ (if $d \leq p$).
	Let us also consider the Sobolev embedding estimate \eqref{Sobombas}, where
	$p^\ast:=\frac{p\dimn}{\dimn-p}$  (if $\dimn>p$) and $p^\ast:=\infty$ (if $\dimn\le p$) were defined.
	Since $2 \leq p$ from (A1),  we clearly have
	\begin{equation}
	\label{eq: new index relations}
	\hatp \leq \pcs . 
	\end{equation}
	Hence $p_1$ also satisfies $p_1<\pcs$, which implies that the Sobolev embedding estimate \eqref{Sobombas} holds with this $p_1$. Now, assume first that $p_1 > 2$ and let us fix a real number $r$ satisfying
	\begin{equation}
	\label{rtprop}
	\frac{d}{2} < r \leq \frac{\pcs}{p_1-2} \ .
	\end{equation}
	Such a number $r$ always exists, since
	for $d\le p$ there is no upper bound ($p^\ast:=\infty$), and for $d>p$
	the assumption $p_1<\hat p$ implies
	$$
	p_1-2 < \hat p-2 = \frac{2p }{\dimn-p} = \frac{2\pcs }{\dimn}
	$$
	which yields
	$$
	\frac{d}{2}  <
	\frac{\pcs}{p_1-2} .
	$$
	Further, the assumption $d \geq 2$ on the dimension of the manifold implies $r > 1$.
	(In theory the trivial case $d=1$ is also allowed,   then we must assume that $r > 1$ is chosen, which is allowed since $\frac{\pcs}{p_1-2}>1$.)

	Now let $s>1$ be chosen such that $\frac{1}{r}+\frac{1}{s}=1$. Then H\"older's inequality implies
	\begin{equation}
	\label{holdimp}
	\int_{\S_h^{ij}}|\uh|^{p_1-2} \le
	\|1\|_{L^{s}(\S_h^{ij})} \, \Bigl\||\uh|^{p_1-2}\Bigr\|_{L^{r}(\S_h^{ij})} = \meas(\S_h^{ij})^{1/s} \, \|\uh \|^{p_1-2}_{L^{(p_1-2)r}(\S_h^{ij})}\, .
	\end{equation}
	Here $(p_1-2)r\le\pcs$ and \eqref{Sobombas} imply
	$$
	\|\uh \|^{p_1-2}_{L^{(p_1-2)r}(\S_h^{ij})} \leq \|\uh \|^{p_1-2}_{L^{(p_1-2)r}(\S)}
	\leq k_1^{p_1-2} \|\uh \|_{1,p,\S}^{p_1-2}\, .
	$$
	Here, by Lemma \ref{lemuhbd}, $\|\uh \|_{1,p,\S}$ is bounded independently of $h$.
	Hence,
	(\ref{holdimp}) is bounded as
	\begin{equation}
	\label{holdimpom}
	\int_{\S_h^{ij}}|\uh|^{p_1-2} \le K_1 \meas(\S_h^{ij})^{1/s}
	\end{equation}
	with some constant $K_1>0$ independent of $h$.
	Finally, if $p_1=2$ then the \corr \ equality (\ref{holdimpom})
	holds trivially with $s=1$.
	
	The integral  of the positive constant $\alpha$ 
	is simply
	$$
	\int_{\S_h^{ij}} \alpha = \alpha \meas(\S_h^{ij}) .
	$$
	Substituting the two estimates into \eqref{sigb1}, we obtain
	\begin{equation*}
	a_{ij}({\bf \bar c}) \leq \bigg( - \frac{\sigma_0\mu_0}{h^2} + \alpha \bigg)\meas(\S_h^{ij}) + \beta K_1 \meas(\S_h^{ij})^{1/s}  .
	\end{equation*}
	
	For  sufficiently small $h$, we can write
	$$
	a_{ij}({\bf \bar c}) \leq A^{ij}(h) := - \frac{C_0}{h^2} \meas(\S_h^{ij}) + C_1 \meas(\S_h^{ij})^{1/s} ,
	$$
	more precisely, there exist positive constants $h_0$, $C_0$ and $C_1$  independently of $h$ and $i,j$  such that the above inequality holds for $h<h_0$.
	Then, using that $\frac{1}{r}+\frac{1}{s}=1$ and the regularity \eqref{meshreg} of the mesh,
	we have
	\begin{align*}
	A^{ij}(h) =&\ \meas(\S_h^{ij})^{1/s} \Big( - \frac{C_0}{h^2} \meas(\S_h^{ij})^{1/r} +  C_1 \Big) \\
	\leq &\ \meas(\S_h^{ij})^{1/s} \Big( - C_2  h^{-2+(d/r)}  +  C_1 \Big) .
	\end{align*}
	Since \eqref{rtprop} implies $\frac{d}{r}<2$, the term in brackets tends to $-\infty$ as $h \to 0$ and hence $A^{ij}(h)<0$ for $h<h_0$. 
	
	\sm
	
	Altogether, we obtain  that there exists $h_0>0$ \sth\ for $h \leq h_0$ and all $i,j$
	\begin{equation*}
	a_{ij}({\bf \bar c}) < 0 .
	\end{equation*}

	(ii) \ For any $i =1,2,\dotsc,n$,
	\begin{equation}
	\label{asum}
	\begin{aligned}
	\sum\limits_{j=1}^{\bar n} a_{ij}({\bf \bar c}) =&\ \int_{\S_h} \Big( b^{-l}(x, \uh,  \nbsh \uh) \ \nbsh \chi_i \cdot \nbsh \big(\sum\limits_{j=1}^{\bar n}\chi_j\big) + r^{-l}(x,\uh) \   \chi_i  \big(\sum\limits_{j=1}^{\bar n} \chi_j \big) \Big) \\ 
	=   &\ \int\limits_{\S_h} r^{-l}(x,\uh) \,   \chi_i
	\geq 0,
	\end{aligned}
	\end{equation}
	using the fact that $\sum_{j=1}^{\bar n}\chi_j\equiv 1$ and that $r, 
	\chi_i$
	are nonnegative.
	
	\sm
	
	(iii) For any vector $\bd\in \er^n, \ \bd\neq 0$ and \corr \ finite element function $v_h=\sum_{j=1}^{  n} d_j \chi_j\neq 0$, using that $b\ge \mu_0$ and $r\ge 0$, we have
	\begin{align*}
	{\bf  A({\bf \bar c})}\bd\cdot \bd
	= \sum_{i,j=1}^{  n} a_{ij}({\bf \bar c}) d_j d_i
	=&\ \int_{\S_h} \Bigl(  b^{-l}(x, \uh,  \nbsh \uh)\ |\nbsh \vhh|^2 + r^{-l}(x,u_h)u_h^2\Bigr) \\
	\geq &\ \mu_0 \int_{\S_h}   |\nbsh \vhh|^2 >0
	\end{align*}
	hence ${\bf A}(\bar c)$ is positive definite.
\end{proof}

\bi

Now it is straightforward to derive the analogue of Theorem  \ref{thcmp} 
for   \sy \
(\ref{syrov}), i.e.\ the discrete  maximum principle.

\begin{theorem}[Discrete  maximum principle]
	\label{thdmp2}
	Let  \pr \ (\ref{mixbvpex}) satisfy
	\begin{equation}
	\label{assqsmax}
	f(x)-q(x,0)\le 0 \quad (x\in\S)
	\end{equation}
	and consider the discretization given in subsection
	\ref{subsscheme}.    Under the conditions of Theorem \ref{thdmp1}, we have
	\begin{equation}
	\label{dmph1}
	\max\limits_{\ov\S} \uh^l \le   \max \{ 0, \max\limits_{\gad} g_h^l \} .
	\end{equation}
	In particular, if   $g\ge 0$  then
	\begin{equation}
	\label{dmph2}
	\max\limits_{\ov\S}\uh^l  = \max\limits_{\gad} \gh^l  ,
	\end{equation}
	and  if   $g\le 0$ then
	we have the discrete nonpositivity property
	\begin{equation}
	\label{dnpp}
	\max\limits_{\ov\S} \uh^l \le 0.
	\end{equation}
\end{theorem}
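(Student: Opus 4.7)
The plan is to combine the algebraic maximum principle from Theorem~\ref{thmmpposdef} with the matrix structure established in Theorem~\ref{thdmp1}, and then translate the resulting nodal inequality to a pointwise inequality on $\ov\S$ via the lift.

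\textbf{Step 1 (nonpositivity of the right-hand side).} The discrete problem was rewritten in \eqref{bvphrew} with the modified source $\hat f^{-l}(x)=f^{-l}(x)-q^{-l}(x,0)$, which by assumption~\eqref{assqsmax} and the definition of the negative lift is nonpositive on $\S_h$. Since each basis function satisfies $\chi_i\ge 0$, we obtain $d_i=\int_{\S_h}\hat f^{-l}\chi_i\le 0$ for every $i=1,\dots,n$. In particular, for the enlarged system with stiffness matrix $\bar{\mathbf A}(\bar{\mathbf c})$ we have $(\bar{\mathbf A}(\bar{\mathbf c})\bar{\mathbf c})_i=d_i\le 0$ for $i=1,\dots,n$.

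\textbf{Step 2 (applying the matrix DwMP).} Freeze the nonlinearity at the computed solution $\bar{\mathbf c}$, so that $\bar{\mathbf A}(\bar{\mathbf c})$ is a constant matrix with the block form \eqref{lin_sys_str} (the lower block is $[\,\mathbf 0\,\vert\,\mathbf I\,]$ by the Dirichlet data encoding). By Theorem~\ref{thdmp1} this matrix satisfies conditions (i)--(iii) of Theorem~\ref{thmmpposdef}, and hence enjoys the DwMP. Combined with Step~1, this yields
\[
 \max_{i=1,\dots,\bar n} c_i \ \le\ \max\Bigl\{0,\,\max_{i=n+1,\dots,\bar n} c_i\Bigr\}.
\]

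\textbf{Step 3 (from nodes to pointwise values).} Because $u_h=\sum c_j\chi_j$ is continuous and piecewise linear with respect to the partition of unity $\{\chi_j\}$, its maximum over $\ov{\S_h}$ is attained at a vertex, so $\max_{\ov{\S_h}}u_h=\max_i c_i$. Similarly, the boundary coefficients $c_{n+1},\dots,c_{\bar n}$ are precisely the nodal values of $g_h$ on $\pa\S_h$, giving $\max_{i>n}c_i=\max_{\pa\S_h}g_h$. The lift operator from Section~\ref{subsscheme} is a bijection between $\ov{\S_h}$ and $\ov\S$ (resp.\ $\pa\S_h$ and $\pa\S$) that preserves pointwise values, hence $\max_{\ov\S}u_h^l=\max_{\ov{\S_h}}u_h$ and $\max_{\pa\S}g_h^l=\max_{\pa\S_h}g_h$. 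Substituting these identities into the inequality of Step~2 produces \eqref{dmph1}.

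\textbf{Step 4 (the two special cases).} If $g\ge 0$, then $g_h\ge 0$ at boundary nodes (as $g_h$ is the nodal interpolant) and the max on the right is nonnegative, so \eqref{dmph1} reduces to $\max_{\ov\S}u_h^l\le\max_{\pa\S}g_h^l$; the reverse inequality is trivial since $u_h^l=g_h^l$ on $\pa\S$, yielding \eqref{dmph2}. If $g\le 0$ then $\max_{\pa\S_h}g_h\le 0$, so the right-hand side of \eqref{dmph1} equals $0$ and \eqref{dnpp} follows.

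The only delicate point is Step~2: the system is nonlinear, so one might worry that the DwMP cannot be invoked. However, once $\bar{\mathbf c}$ is given, the matrix $\bar{\mathbf A}(\bar{\mathbf c})$ is just a number, and Theorem~\ref{thdmp1} was formulated precisely for this frozen matrix (its hypotheses on the coefficients of $b$ and $r$ are uniform in the argument $\bar{\mathbf c}$). The rest of the argument is a straightforward translation between coefficients and nodal values, mirroring the proof of Theorem~\ref{thcmp}.
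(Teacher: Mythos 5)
Your proposal is correct and follows essentially the same route as the paper: verify $d_i=\int_{\S_h}\hat f^{-l}\chi_i\le 0$ from \eqref{assqsmax}, invoke Theorem~\ref{thmmpposdef} via the matrix properties of Theorem~\ref{thdmp1} (with the matrix frozen at the computed solution $\bar{\mathbf c}$), pass from nodal values to all of $\ov\S_h$ by piecewise linearity, transfer to $\ov\S$ since the lift preserves extreme values, and read off the two special cases. Your slightly more explicit treatment of the equality case \eqref{dmph2} (using $u_h=g_h$ on the boundary for the reverse inequality) only spells out what the paper calls a trivial consequence.
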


\begin{proof}
	It follows 
	from
	Theorem \ref{thdmp1} and the properties of piecewise linear functions. Namely, Theorem \ref{thdmp1} states that the matrix ${\bf \bar A}({\bf\bar c})$ satisfies the conditions of Theorem \ref{thmmpposdef}, further, \eqref{assqsmax} implies that $ ( {\bf \bar A}({\bf\bar c}){\bf\bar c})_i = d_{i} = \int_{\S_h} \hat f^{-l}\chi_i\le 0$. Hence, for our system ${\bf \bar A(\bar c)}{\bf \bar c} = {\bf d}$, Theorem \ref{thmmpposdef} 
	provides the algebraic matrix maximum principle  DwMP,  
	which is equivalent to
	\begin{equation}
	\label{dmph1shnode}
	\max\limits_{B_i\in \ov\S_h} \uh(B_i)  \le   \max \{ 0, \max\limits_{B_i\in {\gad}_h} g_h(B_i) \}
	\end{equation}
	for the node points. Since $u_h$ is a piecewise linear function, hence \eqref{dmph1shnode} 
	is also true in the elements  between the node points, i.e. on the whole domain $\ov\S_h$:
	\begin{equation}
	\label{dmph1sh}
	\max\limits_{\ov\S_h} \uh  \le   \max \{ 0, \max\limits_{{\gad}_h} g_h \}
	\end{equation}
	The lift preserves extreme values, hence  \eqref{dmph1sh} implies \eqref{dmph1}.
	Finally, \eqref{dmph2} and \eqref{dnpp} are trivial con\sq s of \eqref{dmph1}.
\end{proof}

\subsubsection{Related results}

One can verify in the same way the discrete minimum principle for \sy \ (\ref{syrov}), in analogy with Theorem \ref{thdmp2}:

\begin{theorem}
	\label{thdminp2}
	Let  \pr \ \eqref{mixbvpex} satisfy
	\begin{equation*}
	f(x)-q(x,0)\ge 0 \quad (x\in\S)
	\end{equation*}
	and consider the discretisation given in subsection \ref{subsscheme}. Under the conditions of Theorem \ref{thdmp1}, we have
	\begin{equation*}
	\min\limits_{\ov\S} \uh^l \ge   \min \{ 0, \min\limits_{\gad} g_h^l \} .
	\end{equation*}
	In particular, if   $g\le 0$  then
	\begin{equation*}
	\min\limits_{\ov\S}\uh^l  = \min\limits_{\gad} \gh^l  ,
	\end{equation*}
	and  if   $g\ge 0$ then
	we have the discrete nonnegativity property
	\begin{equation*}
	\min\limits_{\ov\S} \uh^l \ge 0.
	\end{equation*}
\end{theorem}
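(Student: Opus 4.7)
The strategy is to parallel the proof of Theorem~\ref{thdmp2} line for line, replacing the discrete weak maximum principle for the matrix $\bar A(\bar c)$ by its ``flipped'' counterpart. The crucial observation is that all three matrix properties (i)--(iii) established in Theorem~\ref{thdmp1} depend only on the one-sided bounds $b \ge \mu_0 > 0$, $r \ge 0$, the growth bound on $r$ combined with the a priori bound on $\|u_h\|_{1,p,\S_h}$ from Lemma~\ref{lemuhbd}, and on the mesh regularity and angle conditions. None of these ingredients involves the sign of $f$; in particular, Lemma~\ref{lemuhbd} controls $u_h$ through $\|\hat f\|_{p',\S}$ only. Hence Theorem~\ref{thdmp1} applies verbatim in the present setting.

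Next I would deduce the matrix minimum principle corresponding to Theorem~\ref{thmmpposdef}: if $\bar A$ satisfies (i)--(iii) and $(\bar A \bar c)_i \ge 0$ for $i = 1,\dots,n$, then
$$
    \min\limits_{i=1,\dots,n+m} c_i \;\ge\; \min\Bigl\{ 0,\; \min\limits_{i=n+1,\dots,n+m} c_i \Bigr\}.
$$
This is immediate by applying the DwMP of Theorem~\ref{thmmpposdef} to the vector $-\bar c$: the sign flip turns the hypothesis into $(\bar A(-\bar c))_i \le 0$ and converts $\max_i(-c_i)$ into $-\min_i c_i$, and the same conversion on the right-hand side yields the stated inequality.

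The sign condition $f(x) - q(x,0) \ge 0$, together with the nonnegativity of the basis functions $\chi_i$, yields $d_i = \int_{\S_h} \hat f^{-l}\, \chi_i \ge 0$ for $i = 1,\dots,n$. Applying the matrix minimum principle above to the discrete system $\bar A(\bar c)\bar c = \bar d$ then gives, at the node points,
$$
    \min\limits_{B_i \in \ov\S_h} u_h(B_i) \;\ge\; \min\Bigl\{ 0,\; \min\limits_{B_i \in \pa\S_h} g_h(B_i) \Bigr\}.
$$
Since $u_h$ is piecewise linear, the nodal minimum coincides with the minimum over all of $\ov\S_h$, and since the lift preserves extreme values the inequality transfers to $\ov\S$ as claimed. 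The special cases $g \le 0$ (equality) and $g \ge 0$ (discrete nonnegativity) are immediate corollaries. I do not expect any genuine obstacle; the only point that deserves explicit mention is that neither Theorem~\ref{thdmp1} nor Lemma~\ref{lemuhbd} makes any sign assumption on $f$, so their conclusions can be reused unchanged.
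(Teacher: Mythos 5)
Your proposal is correct and follows essentially the same route as the paper, which simply states that the minimum principle is verified ``in the same way'' as Theorem~\ref{thdmp2}: the matrix properties of Theorem~\ref{thdmp1} and the bound of Lemma~\ref{lemuhbd} are sign-independent, the flipped matrix principle is obtained by the algebraic substitution $\bar c \mapsto -\bar c$ in the DwMP, and the nodal/piecewise-linear/lift argument transfers unchanged. Your write-up just makes explicit the steps the paper leaves implicit.
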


In the special case $q\equiv 0$, the counterpart of Theorem
\ref{thcmpeq} is valid, i.e.\ equality   holds without assuming any sign condition on $g$. We formulate this for both the maximum and minimum principles.
Moreover,  the strict negativity in (\ref{nbacute}) can be
replaced by a weaker nonnegativity condition,
and no regularity condition on the mesh like (\ref{meshreg}) 
needs to be assumed.

\begin{theorem}
	\label{thdmpeq}
	Let us consider the following special case  of \pr \ (\ref{mixbvpex}):
	\begin{equation}
	\label{mixbvpspec}
	\left\{
	\begin{aligned}
	-\div_{\S} \Bigl(b(x,u,\nbs u)\, \nbs u\Bigr)   =&\ f(x) &\qquad& {\rm on } \, \S,
	\\
	u   =&\  g(x)   &\qquad &  {\rm on} \
	\partial\S .
	\end{aligned}
	\right.
	\end{equation}
	under assumptions (A1)--(A3).
	Let
	the triangulation ${\cal T}_h$  satisfy the following property:
	for any $i =1,2,..., n, \ j =1,2,...,\bar n  \ (i\neq j)$
	\begin{equation}
	\label{nbacutew}
	\nbsh \chi_i \cdot \nbsh \chi_j \le  0 .
	\end{equation}
	Then the following results hold:
	
	\me
	
	(1)  If $f\le 0$ 
	then \, $\disp \max\limits_{\ov\S}\uh^l  = \max\limits_{\gad}
	\gh^l$.
	
	(2) If $f\ge 0$ 
	then \, $\disp \min\limits_{\ov\S}\uh^l  = \min\limits_{\gad}
	\gh^l$.
	
	(3) If $f= 0$ 
	then the ranges of $u_h^l$ and $g_h^l$
	coincide, i.e.\ we have $[\min\limits_{\ov\S}\uh^l, \max\limits_{\ov\S}\uh^l ]=[\min\limits_{\gad} \gh^l, \max\limits_{\gad} \gh^l]$
	for the \corr \ intervals.
\end{theorem}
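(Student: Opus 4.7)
The plan is to exploit the strong simplification brought by $q\equiv 0$: since the reaction coefficient $r^{-l}$ vanishes identically, the stiffness matrix entries reduce to
$$a_{ij}({\bf \bar c}) = \int_{\S_h} b^{-l}(x, \uh, \nbsh \uh)\, \nbsh \chi_i \cdot \nbsh \chi_j.$$
Because $b^{-l}\ge \mu_0>0$ by (A3), the weaker hypothesis (\ref{nbacutew}) instantly forces $a_{ij}({\bf \bar c})\le 0$ for all admissible pairs with $i\ne j$. This is precisely the reason the proof should avoid both the mesh regularity assumption (\ref{meshreg}) and the strict negativity in (\ref{nbacute}) used in Theorem \ref{thdmp1}: there is no positive lower-order perturbation to dominate, so no smallness condition on $h$ is needed.

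Next, I would check the row-sum condition. Using $\sum_{j=1}^{\bar n}\chi_j\equiv 1$ on $\S_h$, which implies that its tangential gradient vanishes, one gets
$$\sum_{j=1}^{\bar n} a_{ij}({\bf \bar c})= \int_{\S_h} b^{-l}(x, \uh, \nbsh \uh)\, \nbsh \chi_i \cdot \nbsh\Bigl(\sum_{j=1}^{\bar n}\chi_j\Bigr) = 0$$
for every $i=1,\dots,n$, giving equality in condition (ii) of Theorem \ref{thmmpposdef}. Positive definiteness of ${\bf A}({\bf \bar c})$ follows verbatim from the third step of the proof of Theorem \ref{thdmp1}, using $b^{-l}\ge \mu_0$. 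Thus ${\bf \bar A}({\bf \bar c})$ satisfies the hypotheses for the DWMP.

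To conclude part (1), I would note that $f\le 0$ yields $d_i=\int_{\S_h} f^{-l}\chi_i\le 0$ (here $\hat f=f$ since $q\equiv 0$), and then the DWMP gives the nodal equality $\max_{i=1,\dots,\bar n}c_i = \max_{i=n+1,\dots,\bar n}c_i$. Piecewise linearity propagates this to all of $\ov\S_h$, the boundary data on $\pa\S_h$, and the lift preserves extrema, exactly as in the last paragraph of the proof of Theorem \ref{thdmp2}. For part (2) the cleanest route is a symmetry argument: the function $w:=-u$ solves the same type of problem (\ref{mixbvpspec}) with coefficient $\tilde b(x,z,\xi):=b(x,-z,-\xi)$ — which still satisfies (A3) with identical constants — right-hand side $-f\le 0$, and boundary datum $-g$. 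Its SFEM discretization is precisely $-\uh$ (a one-line check using that $\tilde b(x,-\uh,-\nbsh\uh) = b(x,\uh,\nbsh\uh)$ makes the testing identity transparent), so applying part (1) to $-\uh$ and negating both sides gives the minimum statement. Part (3) is the immediate combination of (1) and (2). The only mildly delicate point is this symmetry verification, since the nonlinearity depends on both $u$ and $\nbs u$; once it is in place, the proof is essentially a distilled version of Theorems \ref{thdmp1}--\ref{thdmp2} with the reaction contribution stripped away.
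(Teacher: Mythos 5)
Your proposal is correct and follows essentially the same route as the paper: with $q\equiv 0$ the reaction term drops out, so (\ref{nbacutew}) and $b\ge\mu_0$ give condition (i) without any smallness of $h$, the row sums vanish identically giving equality in (ii), positive definiteness carries over unchanged, and the DWMP of Theorem \ref{thmmpposdef} plus piecewise linearity and the lift yield (1), with (2) obtained by the substitution $u\mapsto -u$ and (3) by combining (1) and (2). Your explicit check that $\tilde b(x,z,\xi)=b(x,-z,-\xi)$ still satisfies (A3) and that $-u_h$ discretizes the negated problem is a welcome elaboration of the paper's one-line remark, but it is the same argument.
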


\begin{proof}
	To verify (1), we rely on Theorem \ref{thmmpposdef} again, whose
	conditions  follow similarly as in Theorem \ref{thdmp1}.
	The difference in the proof arises in proving property (i), i.e.\
	\begin{equation}
	\label{aijneg2}
	a_{ij}({\bf \bar c})\le 0,
	\end{equation}
	which now follows trivially: since the assumption  $q\equiv 0$  implies $r\equiv 0$, we simply have
	$$
	a_{ij}({\bf \bar c}) =   \int_{\S_h}   b^{-l}(x, \uh,  \nbsh \uh) \nbsh \chi_i \cdot \nbsh \chi_j
	$$
	and thus (\ref{nbacutew}) and the condition $b\ge \mu_0>0$ readily yield \eqref{aijneg2}.
	Further, in property (ii), (\ref{asum}) is simply replaced by
	$$
	\sum\limits_{j=1}^{\bar n} a_{ij}({\bf \bar c}) = \ \int_{\S_h}   b^{-l}(x, \uh,  \nbsh \uh) \ \nbsh \chi_i \cdot \nbsh \big(\sum\limits_{j=1}^{\bar n}\chi_j\big)  = 0,
	$$
	hence we can apply the final statement  of Theorem \ref{thmmpposdef} to obtain that DWMP holds.
	This imnplies, similarly to the argument of Theorem
	\eqref{thdminp2} on piecewise linear functions, that
	$\disp \max\limits_{\ov\S}\uh^l  = \max\limits_{\gad}
	\gh^l$.
	
	\sm
	
	Statement (2) follows from (1) by replacing $u$ by $-u$, and (3)
	is a direct con\sq \ of (1) and (2).
\end{proof}

\subsubsection{On geometric mesh properties}
\label{section:on acute meshes}

Conditions (\ref{nbacute}) and (\ref{nbacutew})
are easy to check, since the values $\nabla_{S_h} \chi_i \cdot \nabla_{S_h} \chi_j$ are constant  on each element.
These conditions have a straightforward geometric interpretation:
the angles must be uniformly acute for (\ref{nbacute}) and nonobtuse for (\ref{nbacutew}).

The currently available constructions of acute triangulations on surfaces of various types
and close issues have been recently surveyed in the paper \cite[Section 3]{Zamf}, see also
the literature therein. We may notice as well that the acuteness property is stable with respect
to small perturbations of the vertices involved, therefore e.g. constructions of acute
triangulations on planes (see \cite{Zamf, BraKorKriSol}) can be easily adapted for producing
acute triangulations for  certain (not very curved) surfaces in 3D.

We underline that many real-life problems appearing in biology, biophysics and biochemistry
are posed on sphere-like surfaces. In typical cases the surface at hand there is a membrane,
a surface of a cell, or of a nucleous, a surface of a crystal, or some other spherical object,
see for instance \cite{DziukElliott_ESFEM,DziukElliott_acta} and the references therein.
Acute triangulations and refinement on a spherical surface will be discussed in Section \ref{secnumexp}.

For general curved surfaces, the issue of refinements
preserving  acuteness (and the construction of families of acute triangulations)
is a more involved task than the case of nonobtuse triangulations (similarly
to the case of Euclidean domains). For the latter, for example,
a constructive proof of existence of a family of nonobtuse triangulations of surfaces of cylindric-type
3D domains is given in \cite{KorAM2012}. It is based on the construction of conforming nonobtuse tetrahedral
meshes and the fact that faces of nonobtuse tetrahedra are nonobtuse triangles.

A way of general remedy in such mesh generation is improving mesh properties using arbitrary Lagrangian Eulerian maps. Such an approach has been developed in \cite{ALEmap} to generate meshes with acute angles for general  closed  surfaces. The algorithm there is based on a constraint system and on arbitrary Lagrangian Eulerian (ALE) maps, see in particular Section~5.3 therein for meshes with angle conditions. This technique can be adapted   to   surfaces with boundary, see Section 
\ref{secnumexp} below where we have done this for our numerical tests.

\subsection{Examples}
\label{section: examples}

We may mention some important real-life examples from the paper \cite{Pucci} where the \corr \ CMP has been proved. The arising equations include the following models:

\begin{enumerate}
	\item[(i)]   gas dynamics:
	$$
	-\div_{\S} \Bigl( \varrho(|\nb_\S u|^2)\, \nbs u\Bigr)   = \, 0
	$$
	where the function $\varrho$, which describes the relation of the velocity  and the density,  is determined by  Bernoulli's law;
	
	\item[(ii)]  surface $p$-Laplacian:
	$$
	-\div_{\S} \Bigl(  |\nb_\S u|^{p-2}\, \nbs u\Bigr)   = \, 0
	$$
	which minimizes the $p$-Dirichlet norm on $S$;
	
	\item[(iii)]   radiative cooling:
	$$
	-\div_{\S} \Bigl( \kappa(x,u)|\nb_\S u|^{p-2}\, \nbs u\Bigr) + \sigma u^4  = \, 0
	$$
	where $\kappa$ is the coefficient of heat conduction and $\sigma$  is the radiation, assumed to be constant. (Here $u\ge 0$ is of physical interest \cite{Keller}, hence the non\li ity is defined as $q(x,z):=z^4$ for $z\ge 0$ only and as $q(x,z)\equiv 0$ for $z\le 0$.)
\end{enumerate}

According to our results, if the finite element discretization of the \corr \ \bvp \ satisfies
the angle conditions   described in subsection \ref{section:on acute meshes}, then the numerical solution satisfies the DMP. The typical situation is that the boundary function $g$ describes a nonnegative physical quantity: $g\ge 0$.
Then Theorem
\ref{thdminp2} ensures the discrete nonnegativity property, i.e. that the numerical solution satisfies \eqref{dnpp}:
\begin{equation*}
\min\limits_{\ov\S} \uh^l \ge 0
\end{equation*}
in accordance with the physical reality.

\section{Numerical experiments}
\label{secnumexp}

In this section we present illustrative results of numerical tests performed for radiative cooling and $p$-Laplacian models from Section~\ref{section: examples}  posed on different surfaces with boundary.  As a conclusion, we may observe the validity of the discrete maximum-minimum principle in each numerical example.

\subsection{Generating acute meshes}

As  described  in Subsection~\ref{section:on acute meshes}, the generation of acute  or nonobtuse  surface triangulations need special care. We first show examples where the particular surface properties can be exploited to generate such meshes directly, and then we also demonstrate how to apply the recently developed   generation approach \cite{ALEmap}. The studied surfaces are thus a hemisphere, a semi-torus, and an elaborate surface with four holes.

For the case of the hemisphere, the surface was approximated by subsequent refinements of the initial grid, taken to be the half of an icosahedron. In the refinement step each triangle was first divided into four similar triangles and then newly generated vertices were projected back to the surface.

For the case semi-torus, the meshes were generated via an initial subdivision   into round strips, such that  the vertices on one side of each strip lie on equal distances and on the other side similarly but in "chess-order", and finally linking the vertices as in Figure \ref{figure:plots} (middle).

For the elaborate surface with four holes
(a variant with boundary of an example in \cite{ElliottVenkataraman_ALE}),
we adapted the techniques of  \cite{ALEmap} to  surfaces with boundary, also handling the extra difficulty of pairs of triangles with a joint edge (almost) perpendicular to the boundary, which are not accounted by the original algorithm.  
Using this method we generated triangular meshes with acute angles.  The surface with four holes is associated with the distance function  
\begin{align*}
\delta(x) = G(x_1^2) + G(x_2^2) + \frac{x_3^2}{0.1^2} - 1 ,
\end{align*}
with $G(s) =  31.25 s (s - 0.36) (s - 0.95)$, then the surface is defined by
\begin{align*}
S = \{ x \in \R^3 \mid  \delta(x) = 0 , \ x_3 \geq 0 \} .
\end{align*}

The quality of all three meshes, in terms of minimal and maximal angles, is shown in Figure~\ref{figure: mesh quality}.

\begin{figure}[ht!]
	\centering
	\includegraphics[width=\textwidth,height=0.3\textheight]{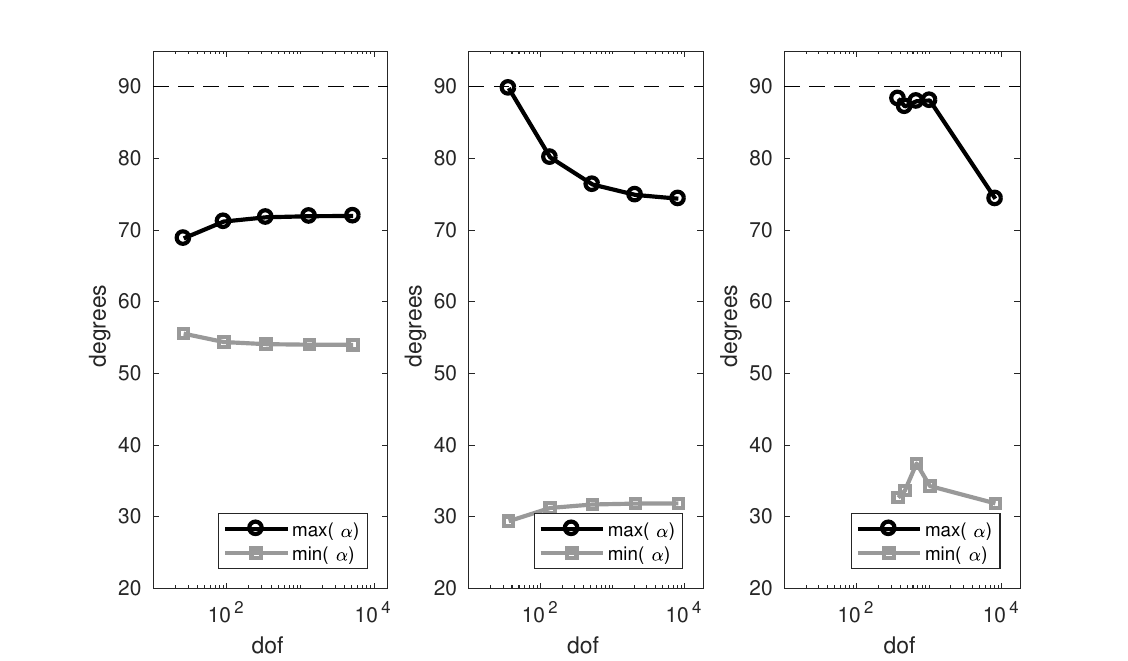} \\
	\caption{The minimum and maximum angles of the meshes generated for the test problems posed on the hemisphere (left), the semi-torus (middle) and the surface with four holes (right).}
	\label{figure: mesh quality}
\end{figure}

\subsection{Radiative cooling on a hemisphere}

First, we carry out some numerical experiments for the radiative cooling model from (iii) of  subsection~\ref{section: examples}
with $\kappa=1$ and $p=2$:
\begin{equation}
\label{eq:RC problem}
\begin{aligned}
-\Delta_{\S} u + \sigma u^4  = &\  0 \qquad \textrm{ in } \ \S, \\
u =&\ g  \qquad \textrm{ on } \ \partial\S ,
\end{aligned}
\end{equation}
with the nonlinearity defined as $q(x,z):=z^4$ for $z\ge 0$ and $q(x,z)\equiv 0$ for $z\le 0$. We choose $\sigma=5$,
and select the boundary function as
\begin{equation*}
g(x,y,z)= 1 +  xy.
\end{equation*}
We assume that $\S$ is a hemisphere of the radius $1$, therefore $\textnormal{Ran}(g) = [0.5,1.5]$, in particular, $g$
is a nonnegative function over the boundary.
This problem, as well as the ones below, were solved with a damped Newton iteration.

The minimum and maximum values of the numerical solutions for the radiative cooling problem on different meshes are presented in Table~\ref{table:dmp RC} (along the minimum and maximum angles in the mesh).
\begin{table}[htbp]
	\centering
	\begin{tabular}{ r c l l l l }
		\toprule
		dof & & $\min\{\alpha\}$ $\qquad$ & $\max\{\alpha\}$ $\qquad$ & $\min\{u_h\}$ $\qquad$ & $\max\{u_h\}$ \\
		\midrule
		91 & & 54.39 & 71.20 & 0.5041 & 1.4755\\
		341 & & 54.09 & 71.80 & 0.5 & 1.5\\
		1321 & & 54.02 & 71.95 & 0.5 & 1.5\\
		5201 & & 54.00 & 71.98 & 0.5 & 1.5\\
		\bottomrule
	\end{tabular}
	\caption{Minimum and maximum values of the numerical solutions for the radiative cooling problem on the hemi-sphere}
	\label{table:dmp RC}
\end{table}

%


\subsection{$p$-Laplacian on a semi-torus}

On the semi-torus we performed the experiment for the $p$-Laplacian  with $p=4$, i.e.
\begin{align*}
-\div_{\S} \big(  |\nb_\S u|^{2} \ \nbs u\big)  = &\  0 \qquad \textrm{ in } \ \S, \\
u =&\ g  \qquad \textrm{ on } \ \partial\S .
\end{align*}
Homogeneous $p$-Laplacian equations with
$p=4$ arise, e.g.~in rheology, see \cite{BusCio}.
We prescribe the following boundary data:
\begin{equation*}
g(x,y,z)= 10 + x, \qquad \textnormal{ hence }  \quad \textnormal{Ran}(g) = [3,17] .
\end{equation*}

The discrete maximum principle for the $p$-Laplace problem is illustrated by Table~\ref{table:dmp pLaplace}, similarly as in the previous table before.
\begin{table}[htbp]
	\centering
	\begin{tabular}{ r c l l l l }
		\toprule
		dof & & $\min\{\alpha\}$ $\qquad$ & $\max\{\alpha\}$ $\qquad$ & $\min\{u_h\}$ $\qquad$ & $\max\{u_h\}$ \\
		\midrule
		36 & & 29.39 & 89.82 & 3.13397 & 17 \\
		136 & & 31.24 & 80.16 & 3.00629 & 17 \\
		528 & & 31.72 & 76.37 & 3.00034 & 17 \\
		2080 & & 31.85 & 74.92 & 3.00002 & 17 \\
		8256 & & 31.88 & 74.39 & 3.00000 & 17 \\
		\bottomrule
	\end{tabular}
	\caption{Minimum and maximum values of the numerical solutions for the $p$-Laplace problem on the semi-torus}
	\label{table:dmp pLaplace}
\end{table}

%

\subsection{Radiative cooling  on a surface with four holes}

Let us consider again the radiative cooling problem \eqref{eq:RC problem} on the surface
\begin{equation}
\label{eq:surface 4H}
S = \{ x \in \R^3 \mid \delta(x) = 0 , \ x_3 \geq 0 \} ,
\end{equation}
with the distance function $\delta(x) = G(x_1^2) + G(x_2^2) + \frac{x_3^2}{0.1^2} - 1$, and where $G(s) =  31.25 s (s - 0.36) (s - 0.95)$.
We prescribe the following boundary data on $\pa  S$:
\begin{equation*}
g(x,y,z)= 1 + xy, \qquad \textnormal{ hence }  \quad \textnormal{Ran}(g) = [0.02345468,1.97653755] .
\end{equation*}
Here the minimum and maximum values over the boundary were obtained by numerically computing the extrema of the constrained problem.

Similarly to the previous tables before, the minimum and maximum values of the numerical solutions for the radiative cooling problem on different meshes approximating the surface with four holes 
are presented in Table~\ref{table:dmp RC 4H_half}.
\begin{table}[htbp]
	\centering
	\begin{tabular}{ r c l l l l }
		\toprule
		dof & & $\min\{\alpha\}$ $\qquad$ & $\max\{\alpha\}$ $\qquad$ & $\min\{u_h\}$ $\qquad$ & $\max\{u_h\}$ \\
		\midrule
		375 & & 32.68 & 88.34 & 0.05137 & 1.94184\\
		464 & & 33.66 & 87.26 & 0.04997 & 1.94978\\
		672 & & 37.45 & 87.99 & 0.02776 & 1.97317\\
		1025 & & 34.30 & 88.10 & 0.02886 & 1.97553\\
		\bottomrule
	\end{tabular}
	\caption{Minimum and maximum values of the numerical solutions for the radiative cooling problem on the surface with four holes 
	}
	\label{table:dmp RC 4H_half}
\end{table}

\bi 

Finally, Figure~\ref{figure:plots} reports on the surface meshes and on the numerical solutions for all three examples. The plotted results correspond to the last rows in each table, respectively.

\begin{figure}[htbp]
	\centering
	\includegraphics[width=.7\textwidth,height=0.26\textheight]{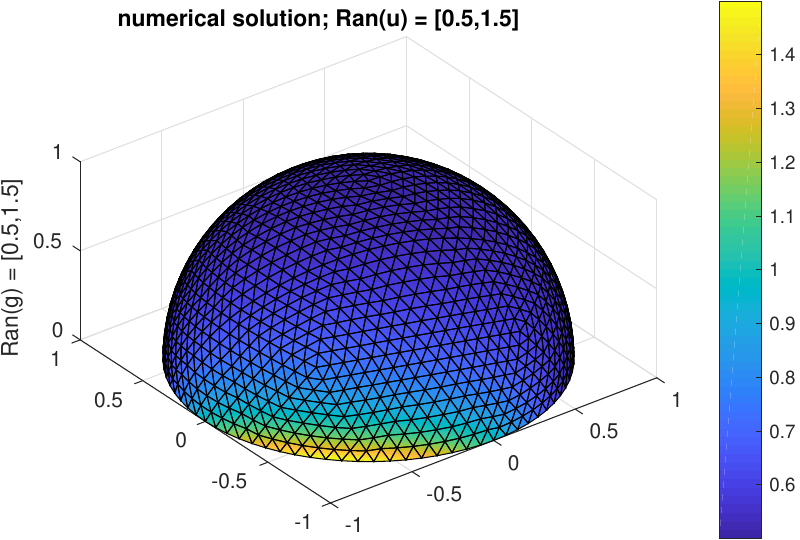} \\
	\includegraphics[width=.7\textwidth,height=0.26\textheight]{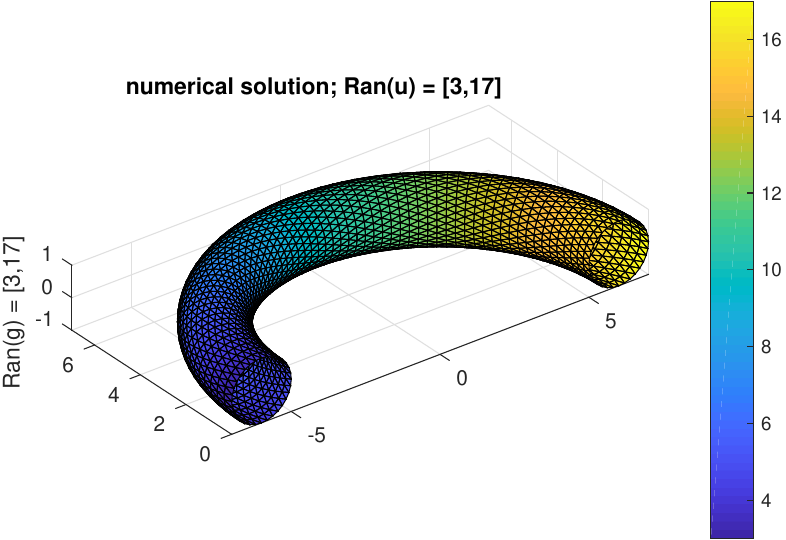} \\
	\includegraphics[width=.7\textwidth,height=0.26\textheight]{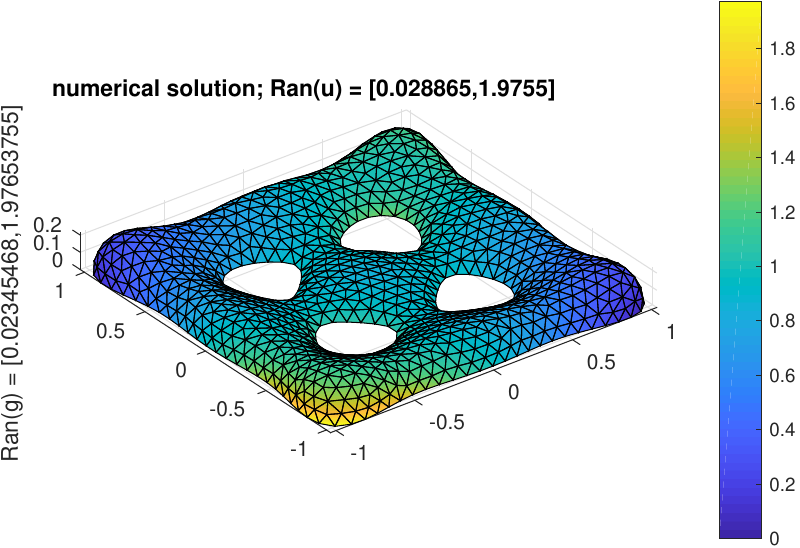} \\
	\caption{Surface meshes and the numerical solutions for the three nonlinear problems (from top to bottom).}
	\label{figure:plots}
\end{figure}

\clearpage

\section*{Acknowledgements}

The work of J. Kar\'atson is supported by the  Hungarian Scientific Research Fund OTKA, No.~K112157 and SNN125119.
The work of B. Kov\'{a}cs is funded by Deutsche Forschungsgemeinschaft, SFB 1173.
A grant from the E\"{o}tv\"{o}s Lor\'{a}nd University let B.K.\ spend a week in Budapest,  allowing personal discussions with J.K.

\end{document}